\documentclass[a4paper]{article}

\usepackage{fullpage}
\usepackage{amssymb,amsfonts,amsmath,amsthm}
\usepackage{xcolor}
\definecolor{cb2blue}{RGB}{55,126,184}
\definecolor{cb2green}{RGB}{77,175,74}
\definecolor{cb2red}{RGB}{228,26,28}
\usepackage{framed}
\RequirePackage{doi}
\usepackage{cite}
\usepackage{bm}
\usepackage{hyperref}
\hypersetup{colorlinks,
        citecolor=cb2green,
        linkcolor=cb2blue,
        urlcolor=cb2red}
\usepackage{enumitem}

\newcommand{\emailLink}[1]{\href{mailto:#1}{#1}}
\newcommand{\orcidLink}[1]{\href{https://orcid.org/#1}{#1}}
\newcommand{\email}[1]{\textsc{email} \emailLink{#1}}
\newcommand{\orcid}[1]{\textsc{orcid} \orcidLink{#1}}

\renewcommand{\leq}{\leqslant}
\renewcommand{\geq}{\geqslant}
\renewcommand{\succeq}{\succcurlyeq}
\renewcommand{\preceq}{\preccurlyeq}
\newcommand{\CC}{\mathcal{C}}
\renewcommand{\SS}{\mathcal{S}}
 
\newcommand{\coloneqq}{:=}
\newcommand{\R}{\mathbb{R}}

\newcommand{\n}[1]{\|#1 \|}
\newcommand{\ip}[2]{\langle #1, #2\rangle}

\newcommand{\one}{\bm{1}}
\newcommand{\drv}{\frac{\mathrm{d}}{\mathrm{d}t}}
\newcommand{\dotx}{\dot{x}}
\newcommand{\dt}{\,\mathrm{d}t}
\renewcommand{\epsilon}{\varepsilon}
\newcommand{\Dxzero}{\mathcal{D}_{x_0,\nu}}
\newcommand{\Dxzeroprox}{\Dxzero^{\mathrm{prox}}}

\DeclareMathOperator*{\argmin}{argmin}
\DeclareMathOperator{\dom}{dom}
\DeclareMathOperator{\diag}{diag}
\DeclareMathOperator{\inter}{int}
\DeclareMathOperator{\ri}{ri}
\DeclareMathOperator{\rbd}{rbd}
\DeclareMathOperator{\bd}{bd}
\DeclareMathOperator{\rank}{rank}

\newtheorem{theorem}{Theorem} 
\newtheorem{lemma}{Lemma} 
\newtheorem{corollary}{Corollary} 
\newtheorem{definition}{Definition} 
\newtheorem{assumption}{Assumption}

\title{\bfseries Mirror descent algorithms with logarithmic barriers}
\author{%
        Alberto De Marchi\thanks{University of the Bundeswehr Munich, Germany. \email{alberto.demarchi@unibw.de}, \orcid{0000-0002-3545-6898}.}%
        \and%
        Yura Malitsky\thanks{Faculty of Mathematics, University of Vienna, Austria. \email{yurii.malitskyi@univie.ac.at}, \orcid{0000-0001-7325-5766}.}%
        \and%
        Adrien B. Taylor\footnote{Inria, DI ENS, CNRS, PSL Research University, France. \email{adrien.taylor@inria.fr}, \orcid{0000-0003-2509-1765}.}%
}
\date{}

\begin{document}
\maketitle

\begin{abstract}
This work derives convergence guarantees for mirror descent and proximal mirror descent algorithms when a logarithmic barrier is used as a distance-generating function.
Standard approaches cannot be applied when the solution lies on the boundary, where the Bregman divergence blows up.
We show that, in a specific setting, both methods enjoy an $O(\log k / k)$ rate, which is also tight.
In addition, our contributions include: (i)~a new technique for handling the blow-up; (ii)~a resolution of a gap in the theory of relative smoothness; and (iii)~a comparison of the proposed approach with interior-point methods.
\end{abstract}

\section{Introduction}\label{sec:intro}

This work considers the convex minimization problem
\begin{equation}
\min_{x \in \R^d}\, f(x) \quad \text{subject to} \quad x \in \CC\cap \SS,
\label{eq:prob} 
\end{equation}
where $\CC, \SS \subset \R^d$ are nonempty closed convex sets, and $f\colon\CC\cap\SS\to \R$ is a convex function.
Here we decompose the constraint set into a \emph{simple} set $\SS$, which is treated explicitly in the optimization problem, and a \emph{complicated} set $\CC$, which is handled via the \emph{logarithmic barrier} $h\colon \CC \to \R\cup\{+\infty\}$.
We consider this function $h$ as a mirror, or distance-generating, function that defines the Bregman divergence $D_h(y,x)\coloneqq h(y)-h(x)-\ip{\nabla h(x)}{y-x}$.
With this, we can formulate the mirror descent algorithm
\begin{equation}\label{eq:MD}\tag{MD}
x_{k} = \argmin_{x\in \SS}\left\{ f(x_{k-1}) + \ip{\nabla f(x_{k-1})}{x - x_{k-1}} + \tfrac{1}{\alpha_k} D_{h}(x,x_{k-1})\right\},
\end{equation}
or its proximal, or implicit, version
\begin{equation}\label{eq:proxMD}\tag{proxMD}
x_{k} = \argmin_{x\in\SS}\left\{ f(x) + \tfrac{1}{\alpha_k} D_{h}(x,x_{k-1})\right\},
\end{equation}
where appropriate assumptions are needed to ensure that the iterates $(x_k)$ are well-defined and feasible.
We postpone the formal definitions and assumptions on $(f,h,\CC)$ until Section~\ref{sec:preliminaries}.
Mirror descent \eqref{eq:MD} was originally introduced by~\cite{nemirovskii1983problem} and later reformulated and popularized in its modern form by~\cite{beck2003mirror}.

For the sake of presentation, the only thing one needs to know about a logarithmic barrier of $\CC$ is that it is well-defined on $\inter(\CC)$ and blows up on its boundary.
Logarithmic barriers have not been particularly popular in Bregman-type methods; the exceptions only confirm our point, and we discuss this further later on.
The main reason is that the classical analysis cannot handle them.
Indeed, for either \eqref{eq:MD} or \eqref{eq:proxMD} with a fixed stepsize $\alpha_k=\alpha$ for simplicity, and under any favorable conditions on $f$, the standard sublinear convergence rate is of the type
\[
        f(x_k) - f_\star \leq \frac{D_h(x_\star, x_0)}{\alpha k},
\]
where the optimal value $f_\star$ is attained at some $x_\star$.
The issue becomes apparent since, for most problems, we expect $x_\star$ to lie on the boundary of $\CC$, in which case
\[
D_h(x_\star,x_0) = h(x_\star) - h(x_0) - \ip{\nabla h(x_0)}{x_\star-x_0} = +\infty.
\]
The aim of this paper is to resolve this issue. We derive convergence rates for both \eqref{eq:MD} and \eqref{eq:proxMD} and show their tightness.
We believe this is of interest in its own right, but along the way we also resolve a long-standing issue in the theory of relative smoothness and compare the proposed approach with interior-point methods.

\paragraph{Relative smoothness in mirror descent.}
As mentioned above, there are some exceptions where log-barriers have been used.
One example is the relative smoothness condition~\cite{bauschke2017,lu2017,birnbaum2011distributed}.
We say that $f$ is $L$-relatively smooth with respect to $h$ if
\begin{equation}
\label{eq:rel_smooth}
D_f(x,y)\leq L D_h(x,y)\quad \forall x,y\in \dom h.
\end{equation}
Under this condition, the analysis of mirror descent~\eqref{eq:MD} with $\alpha_k = \frac{1}{L}$ largely follows the proof for gradient descent and yields the convergence bound $f(x_k) - f_\star \leq \frac{L D_h(x_\star,x_0)}{k}$, which, as already mentioned, becomes vacuous whenever $x_\star \in \bd(\CC)$.
A log-barrier is often a convenient choice of $h$ that ensures \eqref{eq:rel_smooth}.
Below, we provide two illustrative examples:
\begin{itemize}
        \item $D$-optimal design \cite[Section 2.2]{lu2017}.
        Let $H\in \R^{m\times d}$ with $d>m$ and $\rank(H)=m$, and $f(x) = -\log \det(H\diag(x)H^\top)$, $\CC = \R_{+}^d$, $\SS = \{x\colon \ip{\one}{x} = 1\}$, and $h(x)=-\sum_{i=1}^d\log x_i$. We know that $f$ is $1$-relatively smooth with respect to $h$.
        \item Poisson linear inverse problems \cite[Section 5.2]{bauschke2017}.
        Let $f(x) = D_{\phi}(b, Ax)$, $\phi(x) = \sum_ix_i\log x_i$, $\SS = \R^d$, $\CC=\R^d_{+}$, and $h(x)=-\sum_{i=1}^d\log x_i$.
        We know that $f$ is $\n{b}_1$-relatively smooth with respect to $h$.
\end{itemize}
Although these problems were among the motivations for introducing condition~\eqref{eq:rel_smooth}, until now there has been no theory providing quantitative convergence guarantees for \eqref{eq:MD} in these settings.

\paragraph{Interior-point methods.}
The only area of optimization where log-barriers have found a stronghold, with rigorous convergence rates and complexity guarantees, is, of course, interior-point methods (IPMs); see, for instance, the classical~\cite{nesterov2018lectures} or~\cite{gondzio2025interior} for historical perspectives.
Thus, it was only natural for us to follow the same path and compare the Bregman approach with the IPM approach.
While the usual presentation of IPMs does not closely follow the Bregman setting, the latter is more convenient for us.

In fact, classical IPMs can be viewed as \emph{lazy} versions of~\eqref{eq:proxMD}, in which the iterates are computed without updating the prox-center:
\begin{equation}\label{eq:IPM}\tag{IPM}
x_{k} = \argmin_{x\in\SS}\left\{ f(x) + \tfrac{1}{\alpha_k} D_{h}(x,x_0)\right\},
\end{equation}
where $h$ is a log-barrier for the set $\CC$, while $\SS$ typically represents affine constraints, and $x_0$ is a specific point called the analytic center of $\CC\cap\SS$; see, e.g.,~\cite[Definition 5.3.3]{nesterov2018lectures}.
Classical examples include:
\begin{itemize}
        \item linear programming: $\CC=\left\{x\in\R^d:\, a_i^\top\! x\leq b_i \text{ for }i=1,\ldots,n\right\}$ with $a_1,\ldots,a_n\in\R^d$, $b_1,\ldots,b_n\in\R$, and $h(x)=-\sum_{i} \log(b_i-a_i^\top\!x)$,
        \item semidefinite programming: $\CC=\left\{x\in\R^d:\, S(x)=\sum_ix_iA_i\in\mathbb{S}_+^m\right\}$ with $\mathbb{S}_+^m$ the set of $m\times m$ positive semidefinite matrices, $A_1,\ldots,A_d\in\mathbb{S}^m$ symmetric matrices, and $h(x)=-\log\det(S(x))$.
\end{itemize}
Since \eqref{eq:IPM} can be viewed as one iteration of \eqref{eq:proxMD}, one would expect the corresponding bounds to differ as follows:
\[
f(x_k)-f_\star\leq \frac{D_h(x_\star,x_0)}{\alpha_k}
\qquad\text{whereas}\qquad
f(x_k)-f_\star\leq \frac{D_h(x_\star,x_0)}{\sum_{i=1}^{k}\alpha_i}
\]
for \eqref{eq:IPM} and \eqref{eq:proxMD}, respectively.
Here the reader should not confuse the sequences $(\alpha_k)$ in \eqref{eq:proxMD} and \eqref{eq:IPM}~--- they need not be the same.

One may therefore again be disappointed by the same issue as for mirror descent schemes: solutions typically lie on the boundary, where the guarantee explodes under classical choices of $h$.
This may also appear to be at odds with IPM-type results, where, when $h$ is a $\nu$-self-concordant barrier, one can obtain
\[
f(x_k)-f_\star\leq \frac{\nu}{\alpha_k},
\]
see \cite[Theorem 5.3.10]{nesterov2018lectures}.
Perhaps counterintuitively, in Section~\ref{sec:linear} we show that \eqref{eq:proxMD} can be viewed as a direct alternative to IPMs, albeit with a slightly worse total complexity.

\paragraph{Further context.}
Related strategies that exploit the advantages of both approaches are numerous, which is natural given that IPMs and proximal-point methods are among the most iconic algorithmic tools in optimization.
Indeed, augmented Lagrangian (AL) schemes~\cite{rockafellar1976augmented} are proximal-point methods with $h=\tfrac12\|\cdot\|^2$ on dual problems, and modified barrier methods~\cite{polyak1992modified} are similar to AL (dual proximal-point) but use Bregman geometries instead; see, e.g.,~\cite{teboulle1992entropic,eckstein1993nonlinear}.
Bregman-proximal algorithms have also been used with regularized barriers and exponentially growing stepsizes~\cite{tseng1993convergence,doljansky1998interior}.
That said, to the best of our knowledge, such Bregman proximal methods were generally developed with entropy-like barriers that do not blow up at the boundary, as otherwise the complexity guarantees become void.
Finally, another motivation for this study is that combining augmented Lagrangian and interior-point techniques does yield efficient solution methods; see, e.g., the quadratic programming solvers developed in~\cite{pougkakiotis2021interior,cipolla2023proximal}.

\paragraph{Roadmap.}
The most interesting part of the paper is the convergence result (Theorem~\ref{thm:proxMD} in Section~\ref{sec:prox_bregman}) and the construction of the example showing that this rate is tight (Section~\ref{sec:tight_bounds}).
The proof for \eqref{eq:MD} mainly follows that for \eqref{eq:proxMD} but is slightly more technical (Theorem~\ref{thm:MD} in Section~\ref{sec:md}).
As the proof of Theorem~\ref{thm:proxMD} is rather tedious, it may be simpler to gain an initial understanding by reading the informal analysis of mirror flow in Section~\ref{sec:mirror_flow}.
This part can also be skipped entirely.
Finally, Section~\ref{sec:ip} presents a comparison of \eqref{eq:proxMD} with IPMs, and is entirely independent of the previous main sections.

\paragraph{Warnings.}
Throughout the paper, we almost always remain agnostic about how the subproblems in \eqref{eq:MD} and \eqref{eq:proxMD} are solved.
The only exception is the section on IPMs, where a more refined discussion is necessary.

We use $\nabla h(x)$ to denote a chosen subgradient of $h$ at $x$, with the choice clear from the context.
For simplicity, the reader may assume that $h$ is differentiable.
However, our main result (Theorem~\ref{thm:proxMD}) uses only convexity of $f$ and $h$ and the log-barrier structure of $h$.

\section{Preliminaries}\label{sec:preliminaries}

\begin{assumption}[Basic convex optimization setting]\label{ass:basic}
        The sets \(\CC, \SS \subset\R^d\) are nonempty, closed, and convex.
        The function \(f\colon \CC \cap \SS \to\R\) is convex.
        The optimal value \(f_\star\coloneqq \min_{x\in\CC\cap \SS} f(x)\)
        is finite and is attained at some \(x_\star\in\CC\cap \SS\).
\end{assumption}
Later, when discussing \eqref{eq:MD}, we add more structural assumptions on $f$: differentiability and relative smoothness.

A nonempty convex set $\CC$ always has a nonempty relative interior, $\ri(\CC)\neq \varnothing$.%
\footnote{We use the relative interior and relative boundary only for generality; assuming instead that $\CC$ has nonempty interior would require only cosmetic changes in the paper.}
We write $\rbd(\CC)$ for the relative boundary of $\CC$, that is $\rbd(\CC) = \CC\setminus \ri(\CC)$. 

\paragraph{Logarithmic barrier.}
Suppose that the set $\CC$ is parametrized by the convex function $\phi\colon \R^d\to \R$ as
\[
\phi(x) <0~~\text{on $\ri(\CC)$\quad  and \quad $\phi(x) = 0$~ on $\rbd(\CC)$}.
\]
Then for $\nu>0$, we call $h(x) = -\nu \log(-\phi(x))$ a \emph{$\nu$-log-barrier} of the set $\CC$. 

Alternatively, one could say that $h$ is a $\nu$-log-barrier of $\CC$ if it is (i) $\nu$-exp-concave, namely the map $x\mapsto \exp(-\frac{h(x)}{\nu})$ is concave, and (ii) $h(x)\to +\infty$ whenever $x\to \rbd(\CC)$.
The exp-concavity brings a useful characterization of $h$ that is a strict improvement upon convexity of $h$.
Note that this is also one of the key properties in the definition of $\nu$-self-concordant barriers~\cite[Section 5.3]{nesterov2018lectures}.
\begin{lemma}[Characterization of exp-concavity]\label{lem:exp_concave}
	Let \(h:\CC\to\mathbb R\cup\{+\infty\}\) be closed, convex, proper, and \(\nu\)-exp-concave. Then the following statements hold:
    \begin{enumerate}[label=(\roman{*})]
        \item For any $x\in\dom h$, $y\in\CC$ and  $s_x^h\in\partial h(x)$,
        \begin{equation}\label{eq:exp-concave-ineq}
                \nu \exp\left(\frac{h(x)-h(y)}{\nu}\right)
                +\ip{ s_x^h}{y-x}
                \leq \nu.
        \end{equation}
        \item If \(\inter(\dom h)\neq \varnothing\) and \(h\) is twice continuously differentiable on it, then
        \[
                \nabla^2 h(x)
                \succeq
                \frac1\nu \nabla h(x)\nabla h(x)^\top
                \qquad\forall x\in\inter(\dom h).
        \]
	\end{enumerate}
\end{lemma}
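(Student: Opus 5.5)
The plan is to work with the concave function $g(x)\coloneqq \exp(-h(x)/\nu)$, which by $\nu$-exp-concavity is concave on $\CC$ (with $g=0$ where $h=+\infty$, i.e.\ outside $\dom h$), and to transfer subgradient information between $h$ and $g$ via the chain rule.

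For (i), fix $x\in\dom h$, $y\in\CC$, and $s_x^h\in\partial h(x)$. If $h(y)=+\infty$, the exponential term vanishes. In that case the claim reduces to $\ip{s_x^h}{y-x}\leq\nu$. I would obtain this as a limit of the finite case along $y_t=x+t(y-x)$, $t\uparrow 1$, using that the left-hand side of \eqref{eq:exp-concave-ineq} is continuous in $t$ except for the exponential term, which is nonnegative. So it suffices to treat $y\in\dom h$. Then concavity of $g$ gives, for $t\in(0,1]$,
\[
g(x+t(y-x))\geq g(x)+t\bigl(g(y)-g(x)\bigr).
\]
Setting $\psi(t)=h(x+t(y-x))$, we have $\psi(t)\leq -\nu\log\bigl((1-t)e^{-h(x)/\nu}+te^{-h(y)/\nu}\bigr)$. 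On the other hand, $\psi(t)\geq h(x)+t\ip{s_x^h}{y-x}$ since $s_x^h$ is a subgradient. Combining the two, dividing by $t$, and letting $t\downarrow0$, the right-hand side has derivative at $0$ equal to $\nu\bigl(1-e^{(h(x)-h(y))/\nu}\bigr)$. This yields $\ip{s_x^h}{y-x}\leq \nu-\nu e^{(h(x)-h(y))/\nu}$, which is exactly \eqref{eq:exp-concave-ineq}.

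The only delicate point is the $h(y)=+\infty$ case. There, convexity of the segment and the finiteness of $h$ on $[x,y)$ would need checking. Convexity of $\dom h$ gives that $h$ is finite on $[x,y)$ provided $\dom h\ni x$ and the segment stays in $\dom h$ except possibly at $y$. If not, I would instead argue directly with $g(y)=0$ in the concavity inequality, which is still valid on $\CC$, and the same derivative computation gives $\ip{s_x^h}{y-x}\leq\nu$.

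For (ii), compute $\nabla^2 g=g\bigl(\tfrac1{\nu^2}\nabla h\nabla h^\top-\tfrac1\nu\nabla^2h\bigr)$ on $\inter(\dom h)$. Concavity of $g$ means this matrix is $\preceq0$, and since $g>0$ the claimed inequality follows immediately. I expect the only real obstacle to be the boundary and limit bookkeeping in (i), not the main computation.
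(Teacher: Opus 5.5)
Your proof is correct. The paper states this lemma without proof. The closest hint of an intended argument is in Section~\ref{sec:tight_shape}, where $\partial h$ and $\partial\phi$ for $h=-\log(-\phi)$ are linked through the subdifferential chain rule for the convex increasing map $t\mapsto-\log(-t)$.

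Along that route, write $\phi\coloneqq-g$, so that $h=-\nu\log(-\phi)$. The chain rule then gives $\tfrac{g(x)}{\nu}s_x^h\in\partial\phi(x)$ for every $s_x^h\in\partial h(x)$. Part (i) follows in one line from $g(y)\leq g(x)\bigl(1-\tfrac1\nu\ip{s_x^h}{y-x}\bigr)$ after multiplying by $\nu/g(x)$.

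Your argument reaches exactly this inequality from first principles. You squeeze $h(x+t(y-x))$ between the subgradient lower bound and the chord bound from concavity of $g$, then take the right derivative at $t=0$. This amounts to a direct proof of the nontrivial inclusion $\partial h(x)\subseteq\tfrac{\nu}{g(x)}\partial\phi(x)$. So you avoid citing the chain rule, and checking its hypotheses in the extended-valued setting, at the cost of a few extra lines. Your argument also covers nonsmooth $h$ without change. Part (ii) is the standard Hessian computation.

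One simplification: the separate treatment of $h(y)=+\infty$ is unnecessary. Take $a=g(x)>0$ and $b=g(y)\geq0$. The bound $h(x+t(y-x))\leq-\nu\log\bigl((1-t)a+tb\bigr)$ holds for all $t\in[0,1)$, and its right derivative at $0$ is $\nu(1-b/a)$, so the case $b=0$ is covered verbatim.

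Your first suggestion for that case, letting $t\uparrow1$, would need $[x,y)\subset\dom h$. That inclusion does not follow from convexity of $\dom h$ alone. It comes from $g(x+t(y-x))\geq(1-t)g(x)>0$, that is, from concavity of $g$ on $\CC$.
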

To see that the exp-concavity is a strict improvement over convexity, it suffices to apply $1+t \leq e^t$ in \eqref{eq:exp-concave-ineq}.
The parameter $\nu$ in the definition is important: roughly speaking, it measures the complexity of the set $\CC$.
For illustration, consider the following example. Let $\CC$ be given by
\[
\CC = \{x\colon \phi_{i}(x) \leq 0 \quad \text{for $i=1,\dots, m$}\},
\]
where each $\phi_i$ is convex and differentiable.
We have at least two possibilities for introducing a log-barrier. We may consider the barrier $h(x) = -\log \left(-\max_{i\in[m]} \phi_i(x)\right)$, which is simple but \emph{nondifferentiable}.
Alternatively, we may consider
\[
h(x) = -\sum_{i=1}^m\log(-\phi_i(x)) = -m \log (-\Phi(x)),~~\text{where}\quad \Phi\colon \CC\to \R:  \Phi(x) = -\left(\prod_{i=1}^m(-\phi_i(x))\right)^{1/m}
\] 
which looks more intimidating but is \emph{differentiable}.
Here we used the fact that the geometric mean of concave functions is concave on $\CC$.

\begin{assumption}[Well-posedness of the Bregman subproblems]\label{ass:wellposed}
        For every stepsize $\alpha >0$ and every $y\in \ri(\CC)$, the Bregman subproblems associated with \eqref{eq:proxMD} and \eqref{eq:MD},
        \[
        \min_{x\in \SS}
        \left\{
        \alpha f(x)+h(x)-\ip{\nabla h(y)}{x}
      \right\}\qquad\text{and}\qquad
      \min_{x\in \SS}
        \left\{
        \ip{\alpha\nabla f(y)-\nabla h(y)}{x}+h(x)
        \right\}.
        \]
        admit minimizers.
        By construction, solutions to these problems automatically lie in $\ri(\CC)$.
\end{assumption}

Note that Assumption~\ref{ass:wellposed} can be guaranteed in several standard ways, for example by coercivity of the subproblem objectives or by compactness of the feasible set \(\CC\cap \SS\).
We keep it as an explicit assumption in order not to obscure the convergence analysis with existence issues.

Finally, the function $h$ defines the Bregman divergence
\[
        D_h(y,x) =  h(y)-h(x)-\ip{\nabla h(x)}{y-x},
        \qquad
        \forall x,y\in\ri(\CC).
\]
Similarly, we define $D_f$, which is used in Section~\ref{sec:md}.

\paragraph{A technical lemma.}
The following standard bound will be needed.
\begin{lemma}\label{lem:bound2}
        Let $B\geq A> 0$. Then 
        \[
        \left(\sqrt{B} - \sqrt{A}\right)^2\leq \frac{B-A}{4} \log\left(\frac{B}{A}\right).
        \]
\end{lemma}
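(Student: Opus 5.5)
We reduce the inequality to a one-variable statement by homogeneity. Both sides scale linearly under $(A,B)\mapsto(\lambda A,\lambda B)$ for $\lambda>0$, so we may normalize $A=1$ and set $t=\sqrt{B/A}\geq 1$. The claim then becomes
\[
(t-1)^2\leq \frac{t^2-1}{4}\cdot 2\log t = \frac{(t-1)(t+1)\log t}{2}.
\]
For $t=1$ both sides vanish. For $t>1$ we divide by $t-1>0$, and it suffices to prove
\[
\log t \geq \frac{2(t-1)}{t+1}\qquad\forall t\geq 1.
\]

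This is the classical lower bound of the logarithm by its $(1,1)$ Padé approximant, and we prove it by monotonicity. Define $g(t)=\log t - \frac{2(t-1)}{t+1}$, so that $g(1)=0$. Writing $\frac{2(t-1)}{t+1}=2-\frac{4}{t+1}$, we compute
\[
g'(t)=\frac1t-\frac{4}{(t+1)^2}=\frac{(t+1)^2-4t}{t(t+1)^2}=\frac{(t-1)^2}{t(t+1)^2}\geq 0 .
\]
Hence $g$ is nondecreasing on $[1,\infty)$, so $g(t)\geq g(1)=0$, which is the required inequality.

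Multiplying back by $(t-1)/2\geq 0$ and undoing the normalization ($B=t^2A$) gives the stated bound. No step here is a real obstacle. The only point needing care is the choice of substitution: working with $t=\sqrt{B/A}$ rather than $B/A$ turns $\log(B/A)$ into $2\log t$, and this factor $2$ is exactly what cancels the $\frac14$ and produces the Padé-type inequality.

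An alternative route, if one prefers it, is the integral representation $\log(B/A)=\int_A^B \frac{ds}{s}$. The inequality then follows from Cauchy--Schwarz, $\left(\int_A^B \frac{ds}{2\sqrt s}\right)^2\leq \frac{B-A}{4}\int_A^B\frac{ds}{s}$, whose left-hand side equals $(\sqrt B-\sqrt A)^2$. This is arguably the cleanest one-line proof, and I would probably present it in this form.
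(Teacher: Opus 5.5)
Your proof is correct, and your main route differs from the paper's. The Cauchy--Schwarz alternative you sketch at the end is exactly the paper's proof: write $\sqrt B-\sqrt A=\frac12\int_A^B s^{-1/2}\,\mathrm{d}s$ and apply Cauchy--Schwarz against the constant function $1$.

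In your main route you use homogeneity to set $t=\sqrt{B/A}$ and reduce the claim to $\log t\geq \frac{2(t-1)}{t+1}$ on $[1,\infty)$, which you prove via $g'(t)=\frac{(t-1)^2}{t(t+1)^2}\geq 0$. This is sound. One harmless slip: when undoing the division, the factor to multiply by is $\frac{t^2-1}{2}$, not $\frac{t-1}{2}$.

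With $a=\sqrt A$ and $b=\sqrt B$, your reduction shows the lemma is equivalent to the classical inequality $\frac{b-a}{\log b-\log a}\leq\frac{a+b}{2}$ between the logarithmic and arithmetic means. This view makes it easy to check that the ratio of the two sides tends to $1$ as $B/A\to 1$, so the constant $\frac14$ cannot be improved. That is exactly the regime $A_{i+1}/A_i\to1$ in which the paper applies the lemma with constant stepsizes, and it is consistent with the $\frac14\log k$ in the tightness example.

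The paper's integral version buys brevity: no normalization and no separate case $B=A$. The logarithm also appears directly as $\int_A^B\frac{\mathrm{d}s}{s}$, mirroring the Cauchy--Schwarz step used to integrate the mirror-flow inequality in Section~\ref{sec:mirror_flow}.
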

\begin{proof}
By the Cauchy-Schwarz inequality, we have
\begin{align*}
        \sqrt{B} - \sqrt {A}
        =
        \frac 1 2 \int_{A}^{B} \frac{1}{\sqrt{t}} \mathrm{d}t
        \leq
        \frac 1 2\left(\int_{A}^{B} 1^2\mathrm{d}t \cdot \int_{A}^{B}\frac{1}{t} \mathrm{d}t\right)^{1/2}
        =
        \frac 1 2 \sqrt{B-A}\left(\log\Bigl(\frac{B}{A}\Bigr)\right)^{1/2},
\end{align*} 
and the desired inequality follows.
\end{proof}

\section{Proximal mirror descent}\label{sec:prox_bregman}

Within this section, the following holds:
\begin{itemize}
        \item Assumption~\ref{ass:basic} (main problem is well-defined);
        \item $h$ is a $\nu$-log-barrier of $\CC$;
        \item Assumption~\ref{ass:wellposed} (prox subproblem is well-defined);
        \item $x_0\in \ri(\CC)$.
\end{itemize}

The main idea behind the convergence proof is similar to that in \cite{teboulle2023elementary}, for example, where the $O(1/k)$ rate for gradient descent, implicit or explicit, is shown for the norm of the gradient.
In the Euclidean case, this is done by accumulating terms $\n{x_k-x_{k-1}}^2 = \alpha_k^2\n{\nabla f(x_k)}^2$, whereas here we accumulate their analogues $D_h(x_{k-1},x_{k}) +D_h(x_{k},x_{k-1})$.
This, together with the exp-concavity property of log-barriers~\eqref{eq:exp-concave-ineq}, is used to handle the blow-up at the boundary.
The proof below may look somewhat synthetic because, after obtaining a proof based on the idea above, we refined it through the performance estimation framework~\cite{taylor2017smooth,dragomir2022optimal} to obtain tighter and cleaner estimates.

\begin{lemma}[Lyapunov analysis of~\eqref{eq:proxMD}]\label{lem:lyapunov_proxMD}
  Let \(B_k\geq 0\), \(x_k\in \ri(\CC)\), and let \(x_{k+1}\) be obtained by one iteration of~\eqref{eq:proxMD} with stepsize \(\alpha_{k+1}\). Set \(B_{k+1}\coloneqq B_k+\alpha_{k+1}\). Then
          \begin{equation}
        \begin{aligned}\label{eq:lyap_analysis}
                B_{k+1} \left(f(x_{k+1})-f_\star\right)& + \ip{\nabla h(x_{k+1})}{ x_{k+1}-x_\star} \leq  B_k\left(f(x_{k})-f_\star\right) + \ip{\nabla h(x_{k})}{ x_{k}-x_\star}+H_k,
        \end{aligned}
        \end{equation}
        with $H_k\leq \nu$ if $B_k=0$ and $H_k\leq \frac{\nu}{4}\log\left(\frac{B_{k+1}}{B_k}\right)$ if $B_k>0$.
\end{lemma}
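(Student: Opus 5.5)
The plan is to combine the first-order optimality condition of the \eqref{eq:proxMD} subproblem, tested at two points, with the exp-concavity inequality \eqref{eq:exp-concave-ineq} of Lemma~\ref{lem:exp_concave} applied twice, and then to optimize a scalar bound. Write $\alpha=\alpha_{k+1}$.

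\textbf{Step 1: optimality inequality.} The step $x_{k+1}$ minimizes $\alpha f(x)+h(x)-\ip{\nabla h(x_k)}{x}$ over $\SS$, and $x_{k+1}\in\ri(\CC)$. Hence the vector $\tfrac1\alpha(\nabla h(x_k)-\nabla h(x_{k+1}))$ acts as a subgradient of $f+\iota_\SS$ at $x_{k+1}$. This uses only convexity of $f$ and $h$, so for every $y\in\CC\cap\SS$
\[
\alpha\bigl(f(y)-f(x_{k+1})\bigr)\geq \ip{\nabla h(x_k)-\nabla h(x_{k+1})}{y-x_{k+1}}.
\]
Taking $y=x_\star$ gives $\alpha(f(x_{k+1})-f_\star)\leq \ip{\nabla h(x_k)-\nabla h(x_{k+1})}{x_{k+1}-x_\star}$. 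Taking $y=x_k$ gives $\alpha(f(x_{k+1})-f(x_k))\leq -S$, where
\[
S\coloneqq\ip{\nabla h(x_{k+1})-\nabla h(x_k)}{x_{k+1}-x_k}=D_h(x_k,x_{k+1})+D_h(x_{k+1},x_k)\geq0.
\]

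\textbf{Step 2: assembling the Lyapunov inequality.} Split $B_{k+1}(f(x_{k+1})-f_\star)=\alpha(f(x_{k+1})-f_\star)+B_k(f(x_{k+1})-f_\star)$. Bound the first term by the $y=x_\star$ inequality. Bound the second term by $B_k(f(x_k)-f_\star)-\tfrac{B_k}{\alpha}S$, using the $y=x_k$ inequality. Then rewrite
\[
\ip{\nabla h(x_k)}{x_{k+1}-x_\star}=\ip{\nabla h(x_k)}{x_k-x_\star}+\ip{\nabla h(x_k)}{x_{k+1}-x_k}.
\]
This yields \eqref{eq:lyap_analysis} with $H_k=a-cS=(1+c)a+cb$, where
\[
a\coloneqq\ip{\nabla h(x_k)}{x_{k+1}-x_k},\qquad b\coloneqq\ip{\nabla h(x_{k+1})}{x_k-x_{k+1}},\qquad c\coloneqq B_k/\alpha\geq 0.
\]

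\textbf{Step 3: bounding $H_k$.} Apply \eqref{eq:exp-concave-ineq} twice, once with $(x,y)=(x_k,x_{k+1})$ and once with $(x,y)=(x_{k+1},x_k)$. With $t\coloneqq (h(x_k)-h(x_{k+1}))/\nu$ this gives $a\leq \nu(1-e^{t})$ and $b\leq\nu(1-e^{-t})$.

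If $B_k=0$, then $c=0$ and $H_k=a\leq\nu$ immediately.

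If $B_k>0$, then $H_k\leq \nu\,\psi(t)$ with $\psi(t)\coloneqq(1+c)(1-e^{t})+c(1-e^{-t})$, and we bound $\psi$ by its supremum over $t\in\R$. The function $\psi$ is concave in $t$, and its maximizer satisfies $e^{2t}=c/(1+c)$. The maximal value is $1+2c-2\sqrt{c(1+c)}=(\sqrt{1+c}-\sqrt c)^2$. Since $1+c=B_{k+1}/\alpha$, this gives
\[
H_k\leq\frac{\nu}{\alpha}\bigl(\sqrt{B_{k+1}}-\sqrt{B_k}\bigr)^2\leq \frac{\nu}{4}\log\Bigl(\frac{B_{k+1}}{B_k}\Bigr),
\]
where the last step uses Lemma~\ref{lem:bound2} with $B=B_{k+1}$, $A=B_k$, together with $B_{k+1}-B_k=\alpha$.

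\textbf{Main obstacle.} The delicate part is Step 3. Neither inequality alone controls $H_k$: the cross term $a$ can be positive, and it must be compensated by the Bregman symmetrized term $cS$. The key is to use both exp-concavity inequalities jointly, parametrized by the single scalar $t$, and to optimize over $t$; this is what produces the $\sqrt{\cdot}$ form that matches Lemma~\ref{lem:bound2}.

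The only remaining care is in Step 1, where the normal cone of $\SS$ and the choice of subgradients must be handled correctly. This is standard, because both test points $x_\star$ and $x_k$ lie in $\SS$.
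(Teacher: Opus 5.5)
Your proposal is correct and follows essentially the same route as the paper. It uses the same optimality inequality tested at $x_\star$ and, with weight $B_k/\alpha_{k+1}$, at $x_k$. It arrives at the same $H_k=(1+c)a+cb$ and applies the same two exp-concavity inequalities, followed by Lemma~\ref{lem:bound2}. The only cosmetic difference is that you maximize the concave function $\psi(t)$ explicitly, whereas the paper obtains the identical bound $\frac{\nu}{\alpha_{k+1}}(\sqrt{B_{k+1}}-\sqrt{B_k})^2$ by applying AM--GM to $(1+c)e^{t}+ce^{-t}$.
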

\begin{proof}
        From optimality condition for $x_{k+1}$, we have that
        \begin{equation}\label{eq:12e1dq2}
                \ip{\alpha_{k+1} \nabla f(x_{k+1}) + \nabla h(x_{k+1}) - \nabla h(x_k)}{ x_{k+1}-x}\leq 0 \quad \forall x\in  \CC\cap\SS.
        \end{equation}
        In addition, convexity of $f$ yields
        \begin{equation}\label{eq:2k3ne2}
                \alpha_{k+1}  \bigl(f(x_{k+1}) - f(x)\bigr) + \ip{\nabla h(x_{k+1}) - \nabla h(x_k)}{ x_{k+1}-x}\leq 0 \quad \forall x\in \CC\cap\SS.
        \end{equation}  
        Now we add~\eqref{eq:2k3ne2} with $x = x_\star$ and $\frac{B_{k}}{\alpha_{k+1}}\times\eqref{eq:2k3ne2}$ with $x = x_k$, obtaining
        \begin{multline}\label{eq:dscjom2}
                \alpha_{k+1}\left(f(x_{k+1})-f_\star\right) + B_k\left(f(x_{k+1})-f(x_k)\right)
                                                                          + \ip{\nabla h(x_{k+1}) - \nabla h(x_k)}{ x_{k+1}-x_\star} \\
                                                                        + \frac{B_k}{\alpha_{k+1}}\ip{\nabla h(x_{k+1}) - \nabla h(x_k)}{ x_{k+1}-x_k}\leq 0.
        \end{multline}
        The two inner products with $h$ can be transformed as follows:
        \begin{multline*}
                \ip{\nabla h(x_{k+1}) - \nabla h(x_k)}{ x_{k+1}-x_\star} + \frac{B_k}{\alpha_{k+1}}\ip{\nabla h(x_{k+1}) - \nabla h(x_k)}{ x_{k+1}-x_k} \\
                =
                \ip{\nabla h(x_{k+1})}{ x_{k+1}-x_\star} -  \ip{\nabla h(x_{k})}{ x_{k}-x_\star} - \underbrace{\left(\frac{B_k}{\alpha_{k+1}}\ip{\nabla h(x_{k+1})}{ x_k - x_{k+1}} +\frac{B_{k+1}}{\alpha_{k+1}}                            \ip{\nabla h(x_k)}{ x_{k+1} - x_{k}}\right)}_{=:H_k}.
        \end{multline*}
        Substituting this back to \eqref{eq:dscjom2} gives us
        \begin{equation}\label{eq:lask2}
                B_{k+1} \left(f(x_{k+1})-f_\star\right) + \ip{\nabla h(x_{k+1})}{ x_{k+1}-x_\star}  \leq  B_k\left(f(x_{k})-f_\star\right) + \ip{\nabla h(x_{k})}{ x_{k}-x_\star} + H_k.
        \end{equation}
        The last inequality would be perfect for telescoping if we could deal with the $H_k$ terms.
        Fortunately, we can do this because of exp-concavity of $h$. Specifically,
        \begin{align*}
                \nu \exp\left(\tfrac{h(x_k)-h(x_{k+1})}{\nu}\right)+\ip{\nabla h(x_k)}{ x_{k+1}-x_k} &\leq \nu\\
                \nu \exp\left(\tfrac{h(x_{k+1})-h(x_{k})}{\nu}\right)+\ip{ \nabla h(x_{k+1})}{ x_{k}-x_{k+1}} &\leq \nu.
        \end{align*}
        Hence, summing these inequalities (with appropriate weights), we obtain
        \begin{align*}
                H_k &\leq \frac{B_k+B_{k+1}}{\alpha_{k+1}}\nu - \frac{B_k}{\alpha_{k+1}}\nu \exp\left(\tfrac{h(x_{k+1})-h(x_{k})}{\nu}\right) - \frac{B_{k+1}}{\alpha_{k+1}}\nu \exp\left(\tfrac{h(x_k)-h(x_{k+1})}{\nu}\right) \\
                &\leq \frac{\nu}{\alpha_{k+1}}\left( B_k + B_{k+1}-2\sqrt{B_{k}B_{k+1}}\right) = \frac{\nu}{\alpha_{k+1}} \left(\sqrt{B_{k+1}}-\sqrt B_k\right)^2,
        \end{align*}
        where the second inequality follows from the AM-GM inequality.
        When $B_k=0$, one has $B_{k+1}=\alpha_{k+1}$ and hence $H_k\leq \nu$.
        When $B_k>0$, $H_k$ can be bounded using Lemma~\ref{lem:bound2}:
        \begin{equation*}
                H_k\leq \frac{\nu}{\alpha_{k+1}} \left(\sqrt{B_{k+1}}-\sqrt B_k\right)^2 \leq  \frac{\nu}{4}\log\left(\frac{B_{k+1}}{B_k}\right).
        \end{equation*}
\end{proof}

Lemma~\ref{lem:lyapunov_proxMD} suggests telescoping the inequality at our disposal to obtain the main convergence results for~\eqref{eq:proxMD}.
\begin{theorem}\label{thm:proxMD}
        Let $\{x_i\}_{i=0,\ldots,k}$ be the iterates of \eqref{eq:proxMD}.
        For any $k\geq 1$, it holds that
        \begin{equation}\label{eq:proxMD_guarantee}
                f(x_k)-f_\star\leq \frac{ \ip{\nabla h(x_{0})}{ x_{0}-x_\star} + 2\nu + \frac \nu 4\log\frac{A_k}{A_1}}{A_k}
        \end{equation}
        where $A_k \coloneqq \sum_{i=1}^k\alpha_i$.
\end{theorem}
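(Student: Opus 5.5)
We will telescope Lemma~\ref{lem:lyapunov_proxMD} from $0$ to $k-1$, taking $B_0=0$, so that $B_i=A_i$ for all $i$. The inequality~\eqref{eq:lyap_analysis} for $i=0,\dots,k-1$, summed, gives
\[
A_k\bigl(f(x_k)-f_\star\bigr)+\ip{\nabla h(x_k)}{x_k-x_\star}\leq \ip{\nabla h(x_0)}{x_0-x_\star}+\sum_{i=0}^{k-1}H_i .
\]
The first step ($B_0=0$) contributes $H_0\leq\nu$. The remaining terms satisfy $H_i\leq \frac{\nu}{4}\log\frac{A_{i+1}}{A_i}$, and these telescope to $\frac{\nu}{4}\log\frac{A_k}{A_1}$. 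This already produces the $\nu+\frac\nu4\log\frac{A_k}{A_1}$ part of the numerator in~\eqref{eq:proxMD_guarantee}.

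The one remaining issue is the sign of the term $\ip{\nabla h(x_k)}{x_k-x_\star}$ on the left. We want to move it to the right and bound it by $\nu$, which accounts for the second $\nu$ in $2\nu$. For this we use exp-concavity, Lemma~\ref{lem:exp_concave}(i), with $x=x_k\in\ri(\CC)$ and $y=x_\star\in\CC$:
\[
\nu\exp\!\Bigl(\tfrac{h(x_k)-h(x_\star)}{\nu}\Bigr)+\ip{\nabla h(x_k)}{x_\star-x_k}\leq\nu .
\]
The exponential term is nonnegative; it equals $0$ when $h(x_\star)=+\infty$, i.e.\ when $x_\star$ lies on the boundary. Hence $-\ip{\nabla h(x_k)}{x_k-x_\star}\leq \nu$. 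This is exactly the step where the log-barrier structure handles the blow-up: we never evaluate $h(x_\star)$, only an inner product with $x_\star$, which is finite.

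Combining the two estimates gives $A_k(f(x_k)-f_\star)\leq \ip{\nabla h(x_0)}{x_0-x_\star}+2\nu+\frac\nu4\log\frac{A_k}{A_1}$. Dividing by $A_k>0$ yields~\eqref{eq:proxMD_guarantee}.

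The only points needing care are technical. First, Lemma~\ref{lem:lyapunov_proxMD} is applied with $x_i\in\ri(\CC)$; this holds inductively by Assumption~\ref{ass:wellposed} together with $x_0\in\ri(\CC)$. Second, the exp-concavity inequality~\eqref{eq:exp-concave-ineq} must still hold when $y=x_\star$ is on the relative boundary. Lemma~\ref{lem:exp_concave} allows $y\in\CC$ with $h(y)=+\infty$, interpreting $\exp(-\infty)=0$, so I expect no real obstacle. For $k=1$ the log term vanishes and the bound reads $(\ip{\nabla h(x_0)}{x_0-x_\star}+2\nu)/\alpha_1$, consistent with the formula.
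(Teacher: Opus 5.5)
Your proposal is correct and follows essentially the same route as the paper's proof: you telescope Lemma~\ref{lem:lyapunov_proxMD} with $B_0=0$ (so $B_i=A_i$), use $H_0\leq\nu$ and $\sum_{i=1}^{k-1}H_i\leq\frac{\nu}{4}\log\frac{A_k}{A_1}$, and then apply exp-concavity~\eqref{eq:exp-concave-ineq} with $y=x_\star$ to bound $\ip{\nabla h(x_k)}{x_\star-x_k}\leq\nu$. Your remarks on feasibility of the iterates and on the exponential term vanishing when $h(x_\star)=+\infty$ are consistent with how the paper uses Lemma~\ref{lem:exp_concave}.
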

\begin{proof}
        Lemma~\ref{lem:lyapunov_proxMD} suggests defining the Lyapunov function
        \[
        \Psi_k\coloneqq A_k\left(f(x_{k})-f_\star\right) + \ip{\nabla h(x_{k})}{ x_{k}-x_\star},
        \]
        and telescoping the inequality to reach $\Psi_k\leq \Psi_{k-1}+H_{k-1}\leq \ldots \leq \Psi_{0}+\sum_{i=0}^{k-1} H_{i}$. 
        On top of this, we know that $H_0\leq \nu$ (by setting $B_0=0$) and that $\sum_{i=1}^{k-1}H_i\leq \frac\nu 4\sum_{i=1}^{k-1} \log\left(\frac{A_{i+1}}{A_i}\right)= \frac\nu 4 \log\left(\frac{A_{k}}{A_1}\right)$. It remains to use exp-concavity~\eqref{eq:exp-concave-ineq} as $\ip{\nabla h(x_{k})}{x_\star- x_{k}}\leq \nu$ to deal with the second term in $\Psi_k$.
\end{proof}

One can obtain many minor variations around this base theme, for instance by explicitly accounting for specific starting points $x_0$ such that $\nabla h(x_0)=0$.
In this case, one can use $H_0=A_0=0$ and reach
\begin{equation}
    \label{eq:proxMD_guarantee_analytic_center}
    f(x_k)-f_\star\leq \frac{ \nu + \frac \nu 4\log\frac{A_k}{A_1}}{A_k},
  \end{equation}
which can more readily be compared with classical bounds for~\eqref{eq:IPM}.
Starting from the analytic center $x_0 \coloneqq \argmin_{x\in \SS} h(x)$, whose definition implies $\ip{\nabla h(x_0)}{y-x_0}\geq 0$ for all $y\in\SS$, one obtains
$\ip{\nabla h(x_0)}{x_0-x_\star}\leq 0$ and therefore
\begin{equation*}
        f(x_k)-f_\star\leq \frac{ 2\nu + \frac \nu 4\log\frac{A_k}{A_1}}{A_k}.
\end{equation*}
Similarly, one can account for starting points minimizing $B_0 f(x)+h(x)$ (with $B_0>0$) by adapting the proof with $B_k\coloneqq B_0+A_k=B_0+\sum_{i=1}^k \alpha_i$ with the convention that the empty sum is $0$.
In this case, we adapt the Lyapunov strategy to
\[ \Psi_k\coloneqq B_k\left(f(x_{k})-f_\star\right) + \ip{\nabla h(x_{k})}{ x_{k}-x_\star},\]
and telescope the inequality to reach $\Psi_k\leq \Psi_{k-1}+H_{k-1}\leq \ldots \leq \Psi_{0}+\sum_{i=0}^{k-1} H_{i}$.
Here we have to use $\sum_{i=0}^{k-1}H_i\leq \frac\nu 4\sum_{i=1}^{k-1} \log\left(\frac{B_{i+1}}{B_i}\right)= \frac\nu 4 \log\left(\frac{B_{k}}{B_0}\right)$, arriving at
\begin{align}
        B_k (f(x_k)-f_\star)-\nu&\leq B_0\left(f(x_{0})-f_\star\right) + \ip{\nabla h(x_{0})}{ x_{0}-x_\star}+\frac\nu 4 \log\left(\frac{B_{k}}{B_0}\right) \nonumber\\
        &\leq \ip{B_0 \nabla f(x_0)+\nabla h(x_{0})}{ x_{0}-x_\star}+\frac\nu 4 \log\left(\frac{B_{k}}{B_0}\right).
        \label{eq:proxMD_guarantee_other_starting_point}
\end{align}

\paragraph{Specific stepsize schedules.}
The two corollaries below concern~\eqref{eq:proxMD} with specific stepsize schedules. A first classical choice (e.g., akin to mirror descent) is that of constant stepsizes. Another possibility, more closely related to classical interior-point strategies is that of geometric stepsizes.

\begin{corollary}
        Let $\alpha_k=\alpha$.
        Let $\{x_i\}_{i=0,\ldots,k}$ be the iterates of \eqref{eq:proxMD}.
        For any $k\geq 1$, it holds that 
        \begin{equation}\label{eq:proxMD_guarantee_constantstepsize}
                f(x_k)-f_\star\leq \frac{ \ip{\nabla h(x_{0})}{ x_{0}-x_\star} + 2\nu + \frac \nu 4\log {k}}{\alpha k}.
        \end{equation}
        That is, $f(x_k)-f_\star = O(\frac{\log k}{k})$.
\end{corollary}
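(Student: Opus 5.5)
The plan is to specialize Theorem~\ref{thm:proxMD} directly to the constant schedule $\alpha_i=\alpha$. No new estimates are needed; the only work is evaluating the quantities $A_k$ and $A_k/A_1$ that appear in \eqref{eq:proxMD_guarantee}.

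First, with constant stepsizes we have $A_k=\sum_{i=1}^k\alpha_i=\alpha k$ and $A_1=\alpha$, so $\log\frac{A_k}{A_1}=\log k$. Substituting these into \eqref{eq:proxMD_guarantee} gives
\[
f(x_k)-f_\star\leq \frac{\ip{\nabla h(x_0)}{x_0-x_\star}+2\nu+\frac\nu4\log k}{\alpha k},
\]
which is exactly \eqref{eq:proxMD_guarantee_constantstepsize}. This holds for every $k\geq 1$, since Theorem~\ref{thm:proxMD} does.

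Second, for the asymptotic statement, we note that the numerator consists of two parts. The first is the constant $\ip{\nabla h(x_0)}{x_0-x_\star}+2\nu$, which depends only on $x_0$, $x_\star$ and $\nu$. This constant is finite because $x_0\in\ri(\CC)$, so $\nabla h(x_0)$ is well defined. The second part is $\frac\nu4\log k$. Hence the right-hand side equals $O(1/k)+O(\log k/k)=O(\log k/k)$.

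There is one minor point to check: the constant term could be negative. This only improves the bound, so it does not matter for an upper estimate. In any case, the exp-concavity inequality \eqref{eq:exp-concave-ineq} bounds $\ip{\nabla h(x_0)}{x_\star-x_0}\leq\nu$, so the constant is at least $\nu>0$ and the bound is nontrivial. I expect no real obstacle; the only step requiring care is the identity $A_k/A_1=k$.
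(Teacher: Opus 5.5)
Your proposal is correct and matches the paper's proof, which likewise just notes that $A_k=\alpha k$ (so $A_k/A_1=k$) and substitutes into Theorem~\ref{thm:proxMD}. Your side remark that \eqref{eq:exp-concave-ineq} forces $\ip{\nabla h(x_0)}{x_0-x_\star}+2\nu\geq\nu$ is also correct; it is not needed for the bound, and it makes your earlier hedge about a possibly negative constant unnecessary.
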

\begin{proof}
        In this case, $A_k=\alpha k$.
\end{proof}

\begin{corollary}\label{cor:proxMD_exponentialstepsize} 
        Let $\mu>0$ and $A_1=\alpha_1>0$, and define the sequence of stepsizes using $\alpha_{k+1}=\mu A_k$.
        Let $\{x_i\}_{i=0,\ldots,k}$ be the iterates of \eqref{eq:proxMD}.
        For any $k\geq 1$, it holds that
        \begin{equation}\label{eq:proxMD_guarantee_exponentialstepsize}
                f(x_k)-f_\star\leq \frac{ \ip{\nabla h(x_{0})}{ x_{0}-x_\star} + 2\nu + (k-1)\frac \nu 4\log(1+\mu)}{\alpha_1(1+\mu)^{k-1}}.
        \end{equation}
        It follows that $f(x_k)-f_\star=O((1+\tilde{\mu})^{-k})$ for any $\tilde{\mu}<\mu$.
\end{corollary}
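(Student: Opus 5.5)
The plan is to substitute the geometric stepsize schedule directly into Theorem~\ref{thm:proxMD}; the only work is computing $A_k$ in closed form and evaluating the logarithmic term.

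\textbf{Step 1: closed form for $A_k$.} From $\alpha_{k+1}=\mu A_k$ we get
\[
A_{k+1}=A_k+\alpha_{k+1}=(1+\mu)A_k .
\]
Together with $A_1=\alpha_1$, induction gives $A_k=\alpha_1(1+\mu)^{k-1}$ for every $k\geq 1$.

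\textbf{Step 2: the logarithmic term.} By Step~1,
\[
\log\frac{A_k}{A_1}=(k-1)\log(1+\mu).
\]
Inserting this into \eqref{eq:proxMD_guarantee} yields \eqref{eq:proxMD_guarantee_exponentialstepsize} at once. The same value follows by summing the per-step bounds of Lemma~\ref{lem:lyapunov_proxMD}, since each ratio satisfies $B_{i+1}/B_i=1+\mu$.

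\textbf{Step 3: the asymptotic rate.} The numerator of \eqref{eq:proxMD_guarantee_exponentialstepsize} is $C+c(k-1)$, where
\[
C=\ip{\nabla h(x_0)}{x_0-x_\star}+2\nu,\qquad c=\tfrac{\nu}{4}\log(1+\mu).
\]
So the bound is $O\bigl(k(1+\mu)^{-k}\bigr)$. Fix $\tilde\mu<\mu$ and set $r=(1+\tilde\mu)/(1+\mu)<1$. Then
\[
k(1+\mu)^{-k}=k\,r^k\,(1+\tilde\mu)^{-k},
\]
and $k r^k\to 0$, so in particular $k r^k$ is bounded. Hence $f(x_k)-f_\star=O\bigl((1+\tilde\mu)^{-k}\bigr)$. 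If $C$ happens to be negative this only improves the bound. Note that $C\geq\nu>-\infty$ anyway, because $\ip{\nabla h(x_0)}{x_\star-x_0}\leq\nu$ by \eqref{eq:exp-concave-ineq}.

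\textbf{Main obstacle.} Nothing here is difficult. The one point needing care is that the linear-in-$k$ factor in the numerator is exactly what prevents the rate $(1+\mu)^{-k}$ itself. It has to be absorbed by passing to any strictly smaller $\tilde\mu$, and this is why the statement is phrased that way.
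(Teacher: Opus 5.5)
Your proposal is correct and follows the paper's own argument: you obtain $A_k=\alpha_1(1+\mu)^{k-1}$ from $A_{k+1}=(1+\mu)A_k$ and substitute it into \eqref{eq:proxMD_guarantee}, and your Step~3 spells out the absorption of the linear-in-$k$ factor that the paper leaves implicit. Your aside about $C$ possibly being negative is moot, since, as you yourself observe, $C\geq\nu>0$.
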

\begin{proof}
        In this case, for $k>1$ we have $A_{k}=(1+\mu)A_{k-1}=(1+\mu)^{k-1} A_1 = (1+\mu)^{k-1}\alpha_1$.
\end{proof}
Similar bounds can directly be obtained by exploiting the possibility of an initialization at the analytic center $x_0$ satisfying $\ip{\nabla h(x_0)}{x_0-x_\star}\leq 0$.

\paragraph{Structured barriers.}
It is common to aim at improving the conditioning of the subproblems to be solved.
For instance, \cite{doljansky1998interior,cipolla2023proximal} add a quadratic regularization to the barriers.
One could thus wonder whether the proofs above allow for such a degree of freedom given that quadratic functions are not exp-concave in general.
For this, one can simply consider a structured distance-generating function of the form $h\coloneqq h_1+h_2$ where $h_1$ is finite on the boundary (so $D_{h_1}(x_\star,x)$ does not blow up) and $h_2$ is $\nu$-exp-concave.
In this case, the statement of Lemma~\ref{lem:lyapunov_proxMD} can be generalized to
\begin{multline*}
   B_{k+1} \left(f(x_{k+1})-f_\star\right) + \ip{\nabla h_2(x_{k+1})}{ x_{k+1}-x_\star}+ D_{h_1}(x_\star,x_{k+1})\\
   \leq  B_k\left(f(x_{k})-f_\star\right) + \ip{\nabla h_2(x_{k})}{ x_{k}-x_\star}+ D_{h_1}(x_\star,x_k)+H_k,
\end{multline*}
with the same bounds on $H_k$, thereby resulting in bounds akin to that of Theorem~\ref{thm:proxMD}
\begin{equation*}
        f(x_k)-f_\star\leq \frac{D_{h_1}(x_\star,x_0)+ \ip{\nabla h_2(x_{0})}{ x_{0}-x_\star} + 2\nu + \frac \nu 4\log\frac{A_k}{A_1}}{A_k}
\end{equation*}
and of its variations.

\section{Mirror descent}\label{sec:mirror_algorithms}

\subsection{Mirror descent}\label{sec:md}

Within this section, the following holds:
\begin{itemize}
        \item Assumption~\ref{ass:basic} (main problem is well-defined);
        \item $h$ is a $\nu$-log-barrier of $\CC$;
        \item Assumption~\ref{ass:wellposed} (mirror subproblem is well-defined);
        \item $x_0\in \ri(\CC)$;
        \item $f,h\colon \ri(\CC)\to \R$ are  differentiable\footnote{That is over some open sets containing $\ri(\CC)$.} and $f$ is relatively smooth with respect to $h$. 
\end{itemize}
Let us recall the now-classical assumption of \emph{relative smoothness}~\cite{bauschke2017,lu2017,birnbaum2011distributed}.

\begin{definition}[Relative smoothness]
Let \(f,h\colon\ri(\CC)\to\R\) be convex and differentiable. We say that \(f\) is
\(L\)-relatively smooth with respect to \(h\) on \(\ri(\CC)\) if
\[
        D_f(x,y)\leq L D_h(x,y),
        \qquad
        \forall x,y\in\ri(\CC).
\]
Equivalently, when \(f\) and \(h\) are twice differentiable, this means
\[
        \nabla^2 f(x)\preceq L\nabla^2h(x),
        \qquad
        \forall x\in\ri(\CC).
\]
\end{definition}

For simplicity of presentation, this section focuses on mirror descent~\eqref{eq:MD} with the fixed-stepsize strategy $\alpha_k=\alpha>0$.
The proof below combines the standard mirror descent proof with the idea used to analyze the proximal mirror algorithm.
However, it may look more convoluted, as we use one additional inequality not present in either analysis.

\begin{lemma}[Lyapunov analysis of~\eqref{eq:MD}]\label{lem:lyapunov_MD}
        Let $B_k> 0$, $x_k\in \ri(\CC)$, and let $x_{k+1}$ be obtained by one iteration of~\eqref{eq:MD} with stepsize $\alpha_{k+1}=\alpha\in \left(0,\tfrac1L\right)$.
        Set $B_{k+1}\coloneqq B_k+1$.
        Then
        \begin{equation}\label{eq:lyap_analysis_MD}   
            \alpha B_{k+1}(f(x_{k+1})-f_\star)+\ip{\nabla h(x_{k+1})}{ x_{k+1}-x_\star}
            \leq
            \alpha B_k (f(x_k)-f_\star)+\ip{\nabla h(x_k)}{ x_k-x_\star}+H_k,
        \end{equation}
        with $H_k\leq \tfrac{\nu}{B_k}$.
\end{lemma}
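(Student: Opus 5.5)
The plan is to mimic the proof of Lemma~\ref{lem:lyapunov_proxMD}. The implicit convexity step is replaced by the relative-smoothness descent inequality. The leftover Bregman terms are then absorbed using exp-concavity through a scalar maximization.

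\textbf{Step 1 (basic inequalities).} The optimality condition of \eqref{eq:MD} reads
\[
\ip{\alpha\nabla f(x_k)+\nabla h(x_{k+1})-\nabla h(x_k)}{x_{k+1}-x}\leq 0\qquad\forall x\in\CC\cap\SS.
\]
Relative smoothness gives
\[
f(x_{k+1})\leq f(x_k)+\ip{\nabla f(x_k)}{x_{k+1}-x_k}+L D_h(x_{k+1},x_k).
\]
Convexity gives $f(x_k)+\ip{\nabla f(x_k)}{x-x_k}\leq f(x)$. Combining the three yields, for every feasible $x$,
\[
\alpha\bigl(f(x_{k+1})-f(x)\bigr)\leq \ip{\nabla h(x_k)-\nabla h(x_{k+1})}{x_{k+1}-x}+\alpha L D_h(x_{k+1},x_k).
\]
For $x=x_k$ I skip the convexity step; it is not needed there. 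The three-point identity then turns the inner product into $-D_h(x_{k+1},x_k)-D_h(x_k,x_{k+1})$. This gives the extra "descent" inequality
\[
\alpha\bigl(f(x_{k+1})-f(x_k)\bigr)\leq (\alpha L-1)D_h(x_{k+1},x_k)-D_h(x_k,x_{k+1}).
\]

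\textbf{Step 2 (combination).} I add the general inequality with $x=x_\star$ (weight $1$) and the descent inequality (weight $B_k$). The left side becomes $\alpha B_{k+1}(f(x_{k+1})-f_\star)-\alpha B_k(f(x_k)-f_\star)$. On the right, I split $\ip{\nabla h(x_k)-\nabla h(x_{k+1})}{x_{k+1}-x_\star}$ as
\[
\ip{\nabla h(x_k)}{x_k-x_\star}-\ip{\nabla h(x_{k+1})}{x_{k+1}-x_\star}+\ip{\nabla h(x_k)}{x_{k+1}-x_k}.
\]
This produces \eqref{eq:lyap_analysis_MD} with
\[
H_k=\ip{\nabla h(x_k)}{x_{k+1}-x_k}+\bigl(\alpha L(1+B_k)-B_k\bigr)D_h(x_{k+1},x_k)-B_kD_h(x_k,x_{k+1}).
\]
Since $\alpha L<1$, the coefficient of $D_h(x_{k+1},x_k)\geq0$ is at most $1$, so I bound it by $1$. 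Expanding $D_h(x_{k+1},x_k)$ then cancels the first inner product exactly. What remains is
\[
H_k\leq h(x_{k+1})-h(x_k)-B_k\bigl(h(x_k)-h(x_{k+1})-\ip{\nabla h(x_{k+1})}{x_k-x_{k+1}}\bigr).
\]

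\textbf{Step 3 (exp-concavity and scalar maximization).} I set $t=(h(x_{k+1})-h(x_k))/\nu$ and apply Lemma~\ref{lem:exp_concave}(i) at $x_{k+1}$ toward $x_k$:
\[
\ip{\nabla h(x_{k+1})}{x_k-x_{k+1}}\leq \nu-\nu e^{t}.
\]
This yields $H_k\leq \nu\bigl[(1+B_k)t+B_k(1-e^{t})\bigr]$, a concave function of $t$. Its maximizer is $e^{t}=1+1/B_k$, which gives
\[
H_k\leq \nu\bigl[(1+B_k)\log(1+1/B_k)-1\bigr]\leq \nu\Bigl[\tfrac{1+B_k}{B_k}-1\Bigr]=\tfrac{\nu}{B_k},
\]
using $\log(1+s)\leq s$.

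The main obstacle is Step 2. One must choose which inequality is applied at $x_k$ so that the $D_h(x_{k+1},x_k)$ coefficient stays at most $1$ (this is where $\alpha<1/L$ enters), and then notice that the dangerous linear term $\ip{\nabla h(x_k)}{x_{k+1}-x_k}$ cancels against the expansion of that Bregman term. If the coefficient bound fails, my fallback is to also keep the discarded nonnegative term $(1-\alpha L)(1+B_k)D_h(x_{k+1},x_k)$ and redo the maximization in Step 3 with it.
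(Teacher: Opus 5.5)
Your proposal is correct and follows the paper's proof essentially step for step. You use the same combination (the $x_\star$ inequality plus $B_k$ times the $x_k$ inequality) and the same use of $\alpha L\leq 1$. You arrive at the same expression $H_k\leq (B_k+1)\bigl(h(x_{k+1})-h(x_k)\bigr)+B_k\ip{\nabla h(x_{k+1})}{x_k-x_{k+1}}$, and finish with the same exp-concavity bound and scalar maximization. The only cosmetic difference is that you rewrite the $x_k$ inequality with the three-point identity and bound the coefficient $\alpha L(1+B_k)-B_k$ by $1$, whereas the paper keeps the inner products and bounds $\alpha(B_k+1)L\,D_h(x_{k+1},x_k)\leq (B_k+1)D_h(x_{k+1},x_k)$ directly; both discard the same term.
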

\begin{proof}
        From optimality condition for $x_{k+1}$, we have that
        \begin{equation}\label{eq:12e1dq3}
                \ip{\alpha \nabla f(x_{k}) + \nabla h(x_{k+1}) - \nabla h(x_k)}{ x_{k+1}-x}
                \leq
                0
                \quad \forall x\in \CC\cap\SS
        \end{equation}
        Applying relative smoothness of $f$, we get
        \[
                f(x_{k+1}) - f(x_k) - \ip{ \nabla f(x_k)}{ x_{k+1} - x_k}
                \leq
                L \left[ h(x_{k+1}) - h(x_k) - \ip{\nabla h(x_k)}{ x_{k+1}- x_k} \right]
                =
                L D_h(x_{k+1},x_k),
        \]
        which combined with \eqref{eq:12e1dq3} gives us:
        \begin{equation}\label{eq:2l3rm}
                \alpha f(x_{k+1}) - \alpha f(x_k) + \alpha\ip{ \nabla f(x_{k})}{ x_{k}-x}
                \leq
                \alpha L D_h(x_{k+1}, x_k) + \ip{ \nabla h(x_{k+1}) - \nabla h(x_k)}{ x - x_{k+1}},
        \end{equation}  
        for any $x\in  \CC\cap\SS$. Convexity of $f$ yields
        \begin{equation}\label{eq:2l3rm32dqqc}
                \alpha f(x_{k+1}) -  \alpha f(x)
                \leq
                \alpha L D_h(x_{k+1}, x_k) + \ip{ \nabla h(x_{k+1}) - \nabla h(x_k)}{ x - x_{k+1}}
                \quad \forall x\in  \CC\cap\SS.
        \end{equation}  
        Now, let us add~\eqref{eq:2l3rm32dqqc} with $x = x_\star$ and $B_k\times\eqref{eq:2l3rm32dqqc}$ with $x = x_k$.
        This yields
        \begin{multline}\label{eq:dscjom}
                \alpha\left(f(x_{k+1})-f_\star\right) +  \alpha B_k\left(f(x_{k+1})-f(x_k)\right)  - \ip{\nabla h(x_{k+1}) - \nabla h(x_k)}{ x_\star - x_{k+1}}
                \\
                - B_k\ip{\nabla h(x_{k+1}) - \nabla h(x_k)}{ x_k - x_{k+1}} - \alpha(B_k+1)L D_h(x_{k+1},x_k)
                \leq
                0,
        \end{multline}
        where the terms involving $h$ can be transformed as follows, using $0\leq \alpha\leq \tfrac1L$:
        \begin{multline}\label{eq:dscjom_2}
                \ip{\nabla h(x_{k+1}) - \nabla h(x_k)}{ x_\star - x_{k+1}} + B_k\ip{\nabla h(x_{k+1}) - \nabla h(x_k)}{ x_k - x_{k+1}} + \alpha (B_k+1) L D_h(x_{k+1},x_k)
                \\
                \leq
                \ip{\nabla h(x_{k+1})}{  x_\star - x_{k+1}} -  \ip{\nabla h(x_{k})}{  x_\star  - x_{k}}
                \\
                + \underbrace{B_k\ip{\nabla h(x_{k+1})}{ x_k - x_{k+1}} +(B_k+1)                            \ip{\nabla h(x_k)}{ x_{k+1}-x_k } +(B_k+1)D_h(x_{k+1},x_k)}_{=:H_k}
                .
        \end{multline}
        Thus, an expression for $H_k$ is $ H_k = (B_k+1)(h(x_{k+1}) - h(x_k)) + B_k\ip{\nabla h(x_{k+1})}{ x_k-x_{k+1}}$, 
        which we need to upper-bound.
        We proceed by invoking exp-concavity of $h$:
        \begin{align*}
                H_k &\leq (B_k+1)(h(x_{k+1}) - h(x_k)) +B_k\left(\nu -  \nu \exp\left(\tfrac{h(x_{k+1})-h(x_{k})}{\nu}\right)\right)
                \\
                &\leq \nu \left[(B_k+1)\log\bigl(1+\tfrac 1{B_k}\bigr) - 1\right]
                 \leq \frac{\nu}{B_k},
        \end{align*}
        where the second line is obtained by maximizing the right-hand side over $\Delta\coloneqq h(x_{k+1})-h(x_k)$ (the expression is concave in $\Delta$).
        In the third inequality, we use $\log(1+a)\leq a$ and simplify.
        The desired statement follows by combining~\eqref{eq:dscjom} with~\eqref{eq:dscjom_2} and the bound on $H_k$.
\end{proof}

We are now ready to state the main convergence results for mirror descent \eqref{eq:MD}.
\begin{theorem}\label{thm:MD}
        Let $\{x_i\}_{i=0,\ldots,k}$ be the iterates of \eqref{eq:MD} with $\alpha = \frac 1 L$.
        For any $k\geq 1$, it holds that
        \begin{equation}\label{eq:MD_guarantee}
                f(x_k)-f_\star
                \leq
                \frac{f(x_0) - f_\star + L\left( \ip{\nabla h(x_{0})}{ x_{0}-x_\star} + 2\nu +  \nu \log k\right)}{k+1}.
        \end{equation}
\end{theorem}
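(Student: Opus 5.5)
We telescope Lemma~\ref{lem:lyapunov_MD}. Since that lemma requires $B_k>0$, we start with $B_0 \coloneqq 1$, so that $B_k = k+1$. With $\alpha=\frac1L$, define
\[
\Psi_k \coloneqq \alpha (k+1)\left(f(x_k)-f_\star\right) + \ip{\nabla h(x_k)}{x_k-x_\star}.
\]
Lemma~\ref{lem:lyapunov_MD} gives $\Psi_{k+1}\leq \Psi_k + H_k$ with $H_k\leq \nu/(k+1)$. Summing from $0$ to $k-1$ yields
\[
\Psi_k \leq \Psi_0 + \nu\sum_{i=1}^{k}\tfrac1i \leq \Psi_0 + \nu(1+\log k),
\]
where $\Psi_0 = \alpha(f(x_0)-f_\star) + \ip{\nabla h(x_0)}{x_0-x_\star}$.

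Next we handle the second term of $\Psi_k$ exactly as in Theorem~\ref{thm:proxMD}. Inequality~\eqref{eq:exp-concave-ineq} of Lemma~\ref{lem:exp_concave} with $y=x_\star$ and $x=x_k$ gives $\ip{\nabla h(x_k)}{x_\star-x_k}\leq \nu$, because the exponential term is nonnegative (it is $0$ if $x_\star\in\rbd(\CC)$). Hence $\ip{\nabla h(x_k)}{x_k-x_\star}\geq -\nu$, and so
\[
\alpha(k+1)(f(x_k)-f_\star) \leq \alpha(f(x_0)-f_\star) + \ip{\nabla h(x_0)}{x_0-x_\star} + 2\nu + \nu\log k.
\]
Multiplying by $L=1/\alpha$ and dividing by $k+1$ gives exactly~\eqref{eq:MD_guarantee}.

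The main obstacle is the stepsize range. Lemma~\ref{lem:lyapunov_MD} is stated for $\alpha\in(0,\frac1L)$, whereas the theorem uses $\alpha=\frac1L$. Inspecting the proof of the lemma, the only place $\alpha$ enters is the bound $\alpha(B_k+1)L\,D_h(x_{k+1},x_k)\leq (B_k+1)D_h(x_{k+1},x_k)$ in~\eqref{eq:dscjom_2}, which needs only $\alpha L\leq 1$ together with $D_h\geq 0$. So the lemma holds verbatim at $\alpha=\frac1L$, and I would note this explicitly.

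I would also check two further points. First, the lemma's use of the optimality condition~\eqref{eq:12e1dq3} at $x=x_k$ is legitimate, since $x_k\in\CC\cap\SS$ for every $k\geq1$ and $x_0\in\ri(\CC)$ by assumption; if $x_0\notin\SS$, starting the recursion with $B_0=1$ at $x_0$ still only requires that inequality for $x=x_0$, and I would flag that as an implicit requirement that $x_0\in\SS$. Second, the harmonic-sum bound $\sum_{i=1}^k 1/i\leq 1+\log k$ is standard.
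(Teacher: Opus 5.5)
Your proposal is correct and is essentially the paper's proof. You telescope Lemma~\ref{lem:lyapunov_MD} with $B_0=1$ (so $B_i=i+1$), bound the harmonic sum by $1+\log k$, and use exp-concavity to get $\ip{\nabla h(x_k)}{x_k-x_\star}\geq-\nu$. Your side remarks are also accurate points the paper leaves implicit: the lemma's proof only uses $\alpha L\leq 1$, so it applies at $\alpha=\frac1L$, and the first step, which uses \eqref{eq:2l3rm32dqqc} at $x=x_0$, tacitly requires $x_0\in\SS$.
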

\begin{proof}
        Again, Lemma~\ref{lem:lyapunov_MD} suggests telescoping \eqref{eq:lyap_analysis_MD} with some $B_0>0$.
        Setting $\alpha=\tfrac1L$ and $B_0=1$ gives $B_i=i+1$, and hence
        \begin{equation*}
                (k+1)\tfrac1L\left(f(x_{k})-f_\star\right) + \ip{\nabla h(x_{k})}{ x_{k}-x_\star}
                \leq
                \tfrac1L( f(x_0)-f_\star) +  \ip{\nabla h(x_{0})}{ x_{0}-x_\star} + \nu \sum_{i=0}^{k-1}\tfrac{1}{i+1},
        \end{equation*}
        and consequently,
        \[ 
                (k+1)\tfrac1L\left(f(x_{k})-f_\star\right) -\nu
                \leq
                \tfrac1L( f(x_0)-f_\star) +  \ip{\nabla h(x_{0})}{ x_{0}-x_\star} + \nu \left(1+\log k\right).
        \]
\end{proof}

\paragraph{A different and direct mirror descent result from the Bregman proximal-point perspective.}
A general MD algorithm is specified by a triple $(f,w,\alpha)$, where $f$ is the objective to be minimized, $w$ is the mirror function, and $\alpha$ is the stepsize.
In the Euclidean case, we know that GD with unit stepsize is a particular case of MD applied to the triple  $(f, \tfrac{L}{2}\n{\cdot}^2, 1)$.
In turn, it can be seen as the proximal point algorithm applied to the triple $(f, \tfrac{L}{2}\n{\cdot}^2 - f, 1)$.
Evidently, this can be generalized by replacing $\tfrac{1}{2}\n{\cdot}^2$ by generic convex $h$, as this is just an algebraic manipulation.
So, let us choose $\alpha_k=1$ and use the barrier $w= Lh-f$ in~\eqref{eq:proxMD} (the choice of using $w$ and not $h$ for the log-barrier is for notational consistency within this section), and further assume $w$ is $\nu$-exp-concave.
This yields the update
\begin{equation*}
        x_{k}
        = \argmin_{x\in\SS}\left\{ f(x) +  D_{w}(x,x_{k-1})\right\}
        = \argmin_{x\in\SS}\left\{ \ip{\nabla f(x_{k-1})}{ x} +  L D_h(x,x_{k-1})\right\},
\end{equation*}
and Theorem~\ref{thm:proxMD} directly applies with $A_k=k$:
\[
        f(x_k)-f_\star
        \leq
        \frac{\ip{L\nabla h(x_0)-\nabla f(x_0)}{ x_0-x_\star} +2\nu +\tfrac{\nu}{4}\log(k)}{k}.
\]
Of course, this assumption of $Lh-f$ being exp-concave is more constraining than that of $h$ being exp-concave, so such assumption should be motivated beyond the fact that it makes the result a direct consequence of the Bregman proximal-point analysis.

To motivate this, we simply claim that the $D$-optimal design problem~\cite[Section 2.2]{lu2017} satisfies in those assumptions.
Let $H\in \R^{m\times d}$ with $d>m$, and consider $f(x) = -\log \det(H\diag(x)H^\top)$, $\CC = \R_{+}^d$, $\SS = \{x\colon \ip{\one}{x} = 1\}$, and $h(x)=-\sum_{i=1}^d\log x_i$.
Now, let $L=1$ and $\nu=d-m>0$.
Since for any $t>0$ we have logarithmic homogeneity
\begin{equation*}
w(tx)=w(x)-\nu\log t,
\end{equation*}
differentiation at $t=1$ gives $\nabla^2 w(x)x=-\nabla w(x)$ and $x^\top\nabla^2 w(x)x=\nu$. Moreover, since $w$ is convex,  $\nabla^2w(x)\succeq0$.
Therefore, for any $v$, the Cauchy--Schwarz inequality gives
\begin{align*}
\ip{\nabla w(x)}{v}^2
=
\ip{ \nabla^2 w(x) x}{v}^2 
\leq
\ip{ \nabla^2 w(x) x}{x}
\ip{ \nabla^2 w(x) v}{v}
=
\nu \ip{ \nabla^2 w(x) v}{v}.
\end{align*}
Hence $\nabla^2w(x)\succeq \tfrac{1}{\nu}\nabla w(x)\nabla w(x)^\top$ and $w$ is $\nu$-exp-concave.

\subsection{Mirror flow}\label{sec:mirror_flow}
Since the proofs of Theorems~\ref{thm:proxMD} and~\ref{thm:MD} may look somewhat involved, it might be useful to gain intuition from the corresponding continuous-time dynamics, namely a mirror flow. We consider a simplified setting to make the exposition more transparent.

Let $\SS = \R^d$.
Suppose, in addition to $\nu$-exp-concavity, that $h\colon \CC\to\R\cup\{+\infty\}$ is twice differentiable on $\ri(\CC)$ and strictly convex. Given an initial point $x_0$, the mirror flow is defined as
\begin{equation}
  \label{eq:mf}
\drv \nabla h(x_t) = - \nabla f(x_t), \qquad t\geq 0.
\end{equation}
Alternatively, we can write it as $\nabla^2 h(x_t) \dotx_t = - \nabla f(x_t)$.

First, note that $f(x_t)$ is monotone. This is standard:
\begin{align*}
  \drv{f(x_t)} = \ip{\nabla f(x_t)}{\dotx_t} = -\ip{\nabla^2h(x_t) \dotx_t}{ \dotx_t}\leq 0.
\end{align*}
Now from \eqref{eq:mf}, we have
\begin{align*}
  \ip{\nabla f(x_t) + \drv{\nabla h(x_t)}}{ x_t - x_\star} \leq 0.
\end{align*}
By convexity of $f$ and the chain rule applied to the second term, it follows that
\begin{align}\label{eq:wkvne}
 f(x_t) - f_\star  + \drv \ip{\nabla h(x_t)}{ x_t - x_\star} - \ip{\nabla h(x_t)}{ \dotx_t} \leq 0.
\end{align}
The first two terms are exactly the same as in the Lyapunov function \eqref{eq:lyap_analysis}, and the last one is a continuous analog of the $H_k$ term, the only problematic one. This time, however, we bound it differently
\begin{align*}
  \ip{\nabla h(x_t)}{ \dotx_t} &\leq \sqrt{\ip{\nabla h(x_t)}{ \dotx_t}^2} \\
                              &= \ip{\nabla h(x_t)\nabla h(x_t)^\top \dotx_t}{ \dotx_t}^{1/2} \\
                              &\leq \sqrt {\nu} \ip{\nabla^2 h(x_t)\dotx_t}{ \dotx_t}^{1/2} & &\text{(by exp-concavity)}\\
                               &= \sqrt{\nu}\ip{-\nabla f(x_t)}{ \dotx_t}^{1/2} \\
                               &= \sqrt{\nu}\left[-\drv(f(x_t)-f_\star)\right]^{1/2}
                               .
\end{align*}
It remains only to integrate this inequality carefully. 
For brevity, let $F(t) = f(x_t)-f_\star$. Then integrating~\eqref{eq:wkvne} from $0$ to $T$ and using the latter bound for
$\ip{\nabla h(x_t)}{ \dotx_t}$ yields
\begin{align*}
  \int_0^T F(t)\dt  + \int_0^T\drv \ip{\nabla h(x_t)}{ x_t - x_\star}\dt &\leq \sqrt{\nu}\int_0^T\sqrt{-F'(t)}\dt \\
                                                                        &= \sqrt{\nu}\int_0^T\sqrt{-(t+1)F'(t)} \cdot \frac{1}{\sqrt{t+1}}\dt \\
                                                                        &\leq \left(\nu \int_T^0(t+1)F'(t) \dt \cdot \int_0^T\frac{1}{t+1}\dt\right)^{1/2} \\
                                                                        &\leq \int_T^0(t+1)F'(t) \dt + \frac{\nu}{4}\int_0^T\frac{1}{t+1}\dt \\
                                                                        &= F(0) -  (T+1)F(T) + \int_0^T F(t)\dt + \frac{\nu}{4} \log (T+1),
\end{align*}
where in the second inequality we used Cauchy-Schwarz, in the third AM-GM,
and in the last equation integration by parts.
With this, we arrive at
\[(T+1)F(T)\leq F(0) + \ip{\nabla h(x_0)}{x_0 - x_\star} + \nu +\frac{\nu}{4} \log(T+1),\]
which essentially is exactly the same bound we had in the discrete case.

\subsection{Linear objectives}\label{sec:linear}

This section illustrates that the analyses are much simpler with linear objectives, while not incurring any additional logarithmic terms.
That is, we let $\SS=\R^d$ and $c\in\R^d$ and consider the simpler problem 
\[
        \min_{x\in\CC}\,\left\{ f(x)\equiv \ip{c}{x}\right\},
\]
under the same assumptions as before:
\begin{itemize}
        \item Assumption~\ref{ass:basic} (main problem is well-defined);
        \item $h$ is a $\nu$-log-barrier of $\CC$;
        \item Assumption~\ref{ass:wellposed} (prox subproblem is well-defined);
        \item $x_0\in \ri(\CC)$.
\end{itemize}
This setup is traditional from the interior-point literature where nonlinear objectives are typically delegated to the constraint set; see, e.g.,~\cite[Section 5.3.4]{nesterov2018lectures}.
One should note that with a linear objective,~\eqref{eq:MD} and~\eqref{eq:proxMD} coincide. They also coincide with~\eqref{eq:IPM} up to a stepsize adjustment:
\[ 
        x_{k} = \argmin_{x\in\R^d}\left\{ \left(\sum_{i=1}^k\alpha_i\right)\ip{c}{x} + D_h(x,x_0)\right\}.
\]
In other words, using $\nabla h(x_k)=\nabla h(x_{k-1})-\alpha_k c=\nabla h(x_0)-\sum_{i=1}^{k}\alpha_i c$, one arrives at
\begin{multline*}
        f(x_k)-f_\star
        =
        \ip{c}{x_k-x_\star}
        =
        \frac{\ip{\nabla h(x_k)}{ x_\star-x_k}+\ip{\nabla h(x_0)}{x_k-x_0}+\ip{\nabla h(x_0)}{x_0-x_\star}}{\sum_{i=1}^{k}\alpha_i}
        \\
        \leq
        \frac{2\nu+\ip{\nabla h(x_0)}{x_0-x_\star}}{\sum_{i=1}^{k}\alpha_i},
\end{multline*}
in general, or $f(x_k)-f_\star\leq \tfrac{\nu}{\sum_{i=1}^{k}\alpha_i}$ if $\nabla h(x_0)=0$.

\paragraph{Takeaways from linear objectives.}
Overall, looking at those very simple bounds and analyses, one might think that the bounds~\eqref{eq:proxMD_guarantee} or~\eqref{eq:proxMD_guarantee_analytic_center} are just weak and overly complicated.

We prove in the next section that the additional terms incurred by updating the prox-center are actually not an artifact from our proof, and that the linear-case analysis standard of the IPM literature, although simple and elegant, does not carry much of the difficulty of the general problem.
In fact, looking back at~\eqref{eq:prob}, the linear analysis of this section is no longer generally valid when $\SS\neq \R^d$ (or, in more details, when $\SS$ is not an affine set), i.e., the proof  requires $h$ to encode all constraints.

\section{Tightness of the  bounds}\label{sec:tight_bounds}

In this section, we show tightness of the $\log(k)/k$ bound for \eqref{eq:proxMD}. For simplicity, we show this for  $\nu=1$, $\alpha_k=\alpha=1$, and $x_0$ so that $\nabla h(x_0)=0$. The more general case with $\alpha,\nu>0$ can be obtained by appropriate scaling arguments. We fix $\SS = \R^d$, while the set $\CC$ is defined below.
\begin{theorem}\label{thm:tight_prox_MD}
For any integer $k\geq 1$, there exist a function $f$ satisfying
Assumption~\ref{ass:basic} and a $1$-log-barrier $h$ for a closed convex set $\CC$ such that, when initialized at  $x_0$ for which $\nabla h(x_0)=0$, the iterates of \eqref{eq:proxMD} with $\alpha_k=\alpha=1$,  satisfy
  \begin{equation}\label{eq:tight_WC}
  k (f(x_k)-f_\star) = 1+\sum_{i=1}^{k-1}(\sqrt{i+1}-\sqrt{i})^2\sim 1 + \frac 1 4 \log k.
\end{equation}
\end{theorem}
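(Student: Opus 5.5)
The plan is to build a worst-case instance in which every inequality in the proof of Lemma~\ref{lem:lyapunov_proxMD} and Theorem~\ref{thm:proxMD} holds with equality. With $\nu=\alpha=1$, $A_i=i$ and $\nabla h(x_0)=0$, the telescoped bound (the variant~\eqref{eq:proxMD_guarantee_analytic_center}, where $H_0=\ip{\nabla h(x_0)}{x_1-x_0}=0$) reads
\[
k(f(x_k)-f_\star)\leq \ip{\nabla h(x_k)}{x_\star-x_k}+\sum_{i=1}^{k-1}H_i\leq 1+\sum_{i=1}^{k-1}\bigl(\sqrt{i+1}-\sqrt i\bigr)^2 .
\]
I would therefore enforce the following tightness conditions.
\begin{enumerate}[label=(\alph*)]
\item Convexity of $f$ is tight at $x_{i+1}$ against both $x_\star$ and $x_i$.
\item Both exp-concavity inequalities between $x_i$ and $x_{i+1}$ are tight. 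This holds when $e^{-h}$ is affine on the segment $[x_i,x_{i+1}]$.
\item The AM--GM step is tight, that is, $i\,e^{\Delta_i}=(i+1)e^{-\Delta_i}$ with $\Delta_i=h(x_{i+1})-h(x_i)$. Equivalently, $e^{-h(x_{i+1})}/e^{-h(x_i)}=\sqrt{i/(i+1)}$, so $e^{-h(x_i)}=c/\sqrt i$ for $i\geq1$.
\item The final inequality $\ip{\nabla h(x_k)}{x_\star-x_k}\leq 1$ is tight. This requires $x_\star\in\rbd(\CC)$ (so $e^{-h(x_\star)}=0$) and $e^{-h}$ affine on $[x_k,x_\star]$.
\end{enumerate}
Lemma~\ref{lem:bound2} is not tight, which is why the statement is written with the exact sum and only "$\sim$" the logarithm.

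A one-dimensional instance is impossible. There, condition (a) forces $f$ to be affine on all of $[x_\star,x_0]$, hence all subgradients of $f$ are equal. This contradicts the prox optimality relations $s_{i}=\nabla h(x_{i-1})-\nabla h(x_{i})$ together with $e^{-h(x_i)}\propto 1/\sqrt i$. So I would work in $\R^d$ with $d$ of order $k$ and use the interpolation (performance-estimation) viewpoint. I would prescribe points $x_0,\dots,x_k,x_\star$, values $f_i,h_i$, and subgradients $s_i\in\partial f(x_i)$, $g_i\in\partial h(x_i)$, and require the following.
\begin{itemize}
\item Optimality: $s_i+g_i-g_{i-1}=0$ for $i\geq1$, with $g_0=0$, and $g_\star$ chosen so that $x_\star$ is optimal for $f$.
\item $f$-interpolation: $f_j\geq f_i+\ip{s_i}{x_j-x_i}$ for all $i,j$, with equality in the pairs demanded by (a).
\item Concave interpolation of $\psi=e^{-h}$, which has values $e^{-h_i}$ and supergradients $-e^{-h_i}g_i$: here $\psi_\star=0$, and equality holds in the pairs demanded by (b) and (d).
\end{itemize}
Given such data, I set $f(x)=\max_i\{f_i+\ip{s_i}{x-x_i}\}$, which is convex and piecewise linear, and $\psi(x)=\min_i\{\psi_i+\ip{-\psi_i g_i}{x-x_i}\}$, which is concave and piecewise linear. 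Then I take $\CC=\{\psi\geq 0\}$ and $h=-\log\psi$, a $1$-log-barrier with $\phi=-\psi$. Subgradients are allowed by the Warnings paragraph.

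To check that this instance really runs~\eqref{eq:proxMD} as intended, note that each subproblem $f(x)+h(x)-\ip{g_{i-1}}{x}$ is convex. The condition $0\in s_i+g_i-g_{i-1}$ then certifies that $x_i$ is a minimizer. If the minimizer is not unique, I would either perturb slightly or adopt the selection convention implicit in the analysis. Once the data are verified, all inequalities of Lemma~\ref{lem:lyapunov_proxMD} become equalities, and telescoping gives exactly $k(f(x_k)-f_\star)=1+\sum_{i=1}^{k-1}(\sqrt{i+1}-\sqrt i)^2$. The asymptotic form $\sim 1+\tfrac14\log k$ follows from $(\sqrt{i+1}-\sqrt i)^2=\bigl(\sqrt{i+1}+\sqrt i\bigr)^{-2}\sim \tfrac1{4i}$.

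The main obstacle is writing explicit data that satisfy all these equalities while keeping every interpolation inequality valid. As a first attempt I would use coordinates $x_i$ whose pairwise differences lie in nearly orthogonal directions, for instance $x_i-x_{i+1}$ supported on a fresh coordinate $e_{i+1}$. The magnitudes would be chosen to match $\psi(x_i)=1/\sqrt i$ and the required values $f(x_i)-f_\star$ from the equality chain. Each $s_i$ is then defined through the gradient recursion, and I would verify the remaining (non-tight) interpolation inequalities coordinatewise. If this fails, I would fall back on solving the small PEP numerically for $k=2,3$ to guess a closed form.
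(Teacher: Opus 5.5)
\textbf{Verdict: there is a genuine gap.} Your reduction is sound. With $\nu=\alpha=1$ and $\nabla h(x_0)=0$, equality in \eqref{eq:tight_WC} forces exactly your tightness conditions (a)--(d), including $e^{-h(x_i)}=c/\sqrt i$. Your observation that no one-dimensional instance works (for $k\geq 2$) is also right. But the theorem is an existence statement, and your proposal stops where its proof begins. You never specify the points, values and subgradients, and you verify none of the interpolation inequalities \eqref{eq:cvx_interp} and \eqref{eq:phi_interp}. The non-tight ones matter most: convexity of $f$ between non-consecutive iterates, the concavity conditions for $e^{-h}$ involving $x_0$ and non-consecutive iterates, and $x_\star\in\CC$. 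Necessary conditions for tightness do not show that a tight instance exists. The ``fresh coordinate'' scheme and the numerical fallback for $k=2,3$ are untested, so as written nothing is proved. Moreover, nothing forces $d$ to be of order $k$: two dimensions suffice.

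\textbf{The missing construction.} The paper places all iterates on one line, $x_i=(1,1/\sqrt i)$ for $i\geq1$ and $x_0=(1,1/c)$, with $x_\star=0$ off that line. The second coordinates are $-\sqrt i$ for $s_i^h$ (your $g_i$) and $\sqrt i-\sqrt{i-1}$ for $s_i^f=s_{i-1}^h-s_i^h$. These make every piece $i\geq1$ of $e^{-h}$ restrict to $t\mapsto ct$ on the line $\{(1,t)\}$. Hence the exp-concavity interpolation inequalities among $x_0,\dots,x_k$ are tight or trivial. The slopes of $f$ along the line are decreasing, so the non-consecutive convexity inequalities follow from monotonicity of $\ell\mapsto\sqrt{\ell+1}-\sqrt\ell$. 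The first coordinates are $d_i+1$ for $s_i^h$ and $d_{i-1}-d_i$ for $s_i^f$, with a backward recursion starting at $d_k=-1$. They are tuned so that $\ip{s_i^f}{x_i}=f_i$, which is tightness of convexity toward $x_\star$, and $\ip{s_k^h}{x_\star-x_k}=1$, which is your condition (d). Because $x_\star$ is off the line, tightness toward $x_\star$ no longer forces equal slopes; this is how the one-dimensional obstruction you found is avoided.

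\textbf{Two smaller points in your set-up.} First, $x_\star$ cannot be an interpolation node for $\psi=e^{-h}$ with supergradient $-\psi_\star g_\star$. Since $\psi_\star=0$, that piece is identically zero and forces $\psi\leq 0$. Instead, interpolate $h$ only at $x_0,\dots,x_k$ and check afterwards that one piece vanishes at $x_\star$ and all others are nonnegative there; in the paper's data this amounts to $d_i\geq-1$. Second, the optimality certificate at $x_\star$ is $s_\star=0$ for $f$; there is no $g_\star$.
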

A concrete, but tedious, construction for any fixed $k$ is provided below.
The precise values presented below originate from the idea that one can identify which inequalities used in the proof must be tightened, while the construction is closely related to previous works on performance estimation problems~\cite{drori2014performance,taylor2017smooth,dragomir2022optimal}.
The detailed construction of the corresponding PEP is deferred to the companion note~\cite{demarchi2026mirrorpep}.
The counterexample below was initially obtained through an interactive use of an LLM, which was provided with both this PEP formulation and the convergence proofs from Section~\ref{sec:prox_bregman}; the latter indicate which inequalities in the PEP are expected to be active.

\subsection{Shape of the constructed example}\label{sec:tight_shape}

The construction relies on interpolation arguments that reconstruct convex functions from samples (see, e.g.,~\cite{taylor2017smooth}).
That is, we  provide sampled representations of the objective and barrier functions, respectively in the form,
\begin{equation}\label{eq:sampled_f_h}
	S_f=\{(x_i,s_i^f,f_i)\}_{i\in\{\star,1,2,\ldots,k\}}
	\qquad\text{and}\qquad
	S_h=\{(x_i,s_i^h,h_i)\}_{i\in\{0,1,2,\ldots,k\}},
\end{equation}
so that $f$ and $h$ respectively interpolate $S_f$ and $S_h$.
That is, $S_f$ is such that $f$ is convex with $f(x_i)=f_i$ and $s_i^f\in\partial f(x_i)$ (for all $i=\star,1,2,\ldots,k$), and $S_h$ is such that $h$ is $1$-exp-concave with $h(x_i)=h_i$ and $s_i^h\in\partial h(x_i)$ (for all $i=0,1,2,\ldots,k$).
Further, $S_f$ and $S_h$ are built so that when applying~\eqref{eq:proxMD} to the corresponding $f$ and $h$, the bound in~\eqref{eq:tight_WC} is attained.
To achieve this, $f$ and $h$ are coupled via the condition
\[s_{i+1}^h = s_{i}^h - s_{i+1}^f,\]
which encodes optimality conditions of~\eqref{eq:proxMD}, and by sharing the same $x_i$'s.
As $\SS=\R^d$, and after the interpolation procedure to recover $f$ and $h$, this last condition corresponds to $\nabla h(x_{i+1})=\nabla h(x_i)-\nabla f(x_{i+1})$ when $h$ is differentiable.

Before moving to specific sets $S_f$ and $S_h$, we describe conditions on them to ensure that one can recover $f$ and $h$ interpolating $S_f$ and $S_h$.

\paragraph{Shape of $f$ and conditions on $S_f$.}
The function $f$ is constructed in the form
\begin{equation}\label{eq:model_f}
        f(x)
        =
        \max_{i\in\{\star,1,2,\ldots,k\}}
        \bigg\{ f_i+\ip{s_i^f}{x-x_i}\bigg\},
\end{equation}
where $x_\star, x_1,\dots, x_k\in\R^d$ are given points associated with some $s_\star^f,s_1^f,\ldots,s_k^f\in\R^d$ and $f_\star,f_1,\ldots,f_k\in\R$.
This construction yields a convex function satisfying $s_i^f\in\partial f(x_i)$ and $f_i=f(x_i)$, for all $i$, if and only if
\begin{equation}\label{eq:cvx_interp}
f_j \geq f_i+\ip{s_i^f}{x_j-x_i},
        \qquad \forall i,j\in\{\star,1,2,\ldots,k\}.
\end{equation}
The proof is straightforward and can be found in, e.g.,~\cite[Section 2.3]{taylor2017smooth}.

\paragraph{Shape of $h$ and conditions on $S_h$.}
For $h$, we construct it in the form $h(x)=-\log(-\phi(x))$, with the convention $h(x)=+\infty$ when $\phi(x)\geq 0$, and with
\begin{equation}\label{eq:model_phi}
        \phi(x)
    =
    \max_{i\in\{0,1,\ldots,k\}} \left\{-\exp(-h_i)\left(1-\ip{s_i^{h}}{x-x_i}\right) \right\},
\end{equation}
where similarly $x_0,x_1,\ldots,x_k\in\R^d$, $s^{h}_0,s^{h}_1,\ldots,s^{h}_k\in\R^d$ and $h_0,h_1,\ldots,h_k\in\R$.
This time, defining $\phi_i=-\exp(-h_i)$ and $s^{\phi}_i=-\phi_i s^h_i$ ($i=0,1,\ldots,k$) the constructed~\eqref{eq:model_phi} is convex and satisfies $s_i^{\phi}\in\partial \phi(x_i)$ and $\phi_i=\phi(x_i)=-\exp(-h_i)$ for all $i$ if and only if
\begin{equation}\label{eq:phi_interp}
        \phi_j\geq \phi_i+\ip{s_i^\phi}{x_j-x_i},
        \qquad \forall i,j\in\{0,1,\ldots,k\}.
\end{equation}
Note that $t\mapsto -\log(-t)$ is increasing and convex, and hence subdifferential calculus (e.g.,~\cite[Theorem 4.3.1]{hiriart2013convex}) allows us to link the subdifferentials of $\phi$ with those of $h$.
Hence the function $h=-\log(-\phi)$ is $1$-exp-concave with $h(x_i)=-\log(-\phi_i)=h_i$ and $s_i^h\in\partial h(x_i)$ (for all $i$) if and only if~\eqref{eq:phi_interp}.
Noting that $\phi_i<0$ by construction, the corresponding proof is similarly straightforward as in the previous case.

\paragraph{Shape of the domain $\CC$.}
We do not explicitly construct $\CC$.
Instead, it is implicitly constructed as $\CC = \{x\colon\phi(x)\leq 0\}$; naturally, it follows from \(\phi\) being convex that its sublevel sets are also convex and hence that $\CC$ is convex.
One can also easily verify that $h$ is a log-barrier for $\CC$, as $\dom h = \ri(\CC) = \{x\colon\phi(x)<0\}$, $h(x) =  -\log(-\phi(x))$ for $x\in\dom h$,
and \(h(x)=+\infty\) otherwise.

\subsection{Two-dimensional matching example}\label{sec:tight_details}
This section explicitly constructs the sample sets $S_f$ and $S_h$ that attain the bound in~\eqref{eq:tight_WC}. We first introduce the quantities
needed for the construction, and then verify that $S_f$ and $S_h$ satisfy the
corresponding \emph{interpolation conditions}~\eqref{eq:cvx_interp} and~\eqref{eq:phi_interp}. Set
\[
        R_k= 1+\sum_{i=1}^{k-1}(\sqrt{i+1}-\sqrt i)^2, \quad
        \gamma_i
        = 
        (\sqrt{i+1}-\sqrt i)
        \left(\tfrac1{\sqrt i}-\tfrac1{\sqrt{i+1}}\right)
        =
        \sqrt{\tfrac{i+1}{i}}+\sqrt{\tfrac{i}{i+1}}-2 \qquad i=1,\ldots,k-1.
\]
Then define the function values $\{f_i\}_{i\in\{\star,1,2,\ldots,k\}}$
\begin{equation}
  \label{eq:constr_f}
f_\star = 0, \qquad 
        f_k= \tfrac{R_k}{k},
        \qquad
        f_i= f_{i+1}+\gamma_i,
        \qquad i=k-1, k-2,\ldots,1,  
\end{equation}
Finally, define a sequence $\{d_i\}_{i\in\{0, 1,\ldots,k\}}$ as
\begin{equation}\label{eq:d_i}
d_k= -1,
\qquad \text{and}\qquad
d_{i-1}= d_i+f_i+\sqrt{\tfrac{i-1}{i}}-1, \qquad i=k,k-1,\ldots,1.
\end{equation}

\paragraph{Iterates and gradients.}
We work in \(\R^2\). 
Set $x_\star= 0$, $s_\star^f= 0$, $s_0^h= 0$, and define for $i=1,\ldots,k$,
\begin{equation}\label{eq:x_i_and_s_i}
        x_i= 
        \begin{pmatrix}
        1\\
        1/\sqrt i
        \end{pmatrix},\quad
        s_i^h= 
        \begin{pmatrix}
        d_i+1\\
        -\sqrt i
        \end{pmatrix},\quad
        s_i^f= 
        \begin{pmatrix}
        d_{i-1}-d_i\\
        \sqrt i-\sqrt{i-1}
        \end{pmatrix},
\end{equation}
where one can verify $s_i^f=s_{i-1}^h-s_i^h$ for all $i=1,\ldots,k$.
The essential part of this construction is the second coordinate of
$x_i, s^f_i$, and $s^h_i$. The first one only ensures that 
$\ip{s_i^f}{x_i} = f_i$, as shown below.

\paragraph{Verification of the convex interpolation conditions for $f$.}
Now, we verify that~\eqref{eq:cvx_interp} holds.
Note that, for any $i,j\in\{1,2,\ldots,k\}$, the first coordinate of $s_i^f$ is irrelevant because $x_j-x_i$ has a zero first coordinate.

\begin{enumerate}[label=($f$.\roman{*})]
        \item (\(i=\star\)).
        Since \(s_\star^f=0\) and \(f_\star=0\), the interpolation inequalities reduce to
        $f_j\geq0$, $j=1,\ldots,k$,
        which holds by construction.
        \item (\(j=\star\)).
        We need to verify
        $f_i+\ip{s_i^f}{x_\star-x_i}\leq f_\star=0$. This is true due to  \(\ip{s_i^f}{x_i}=f_i\), which is easy to check
        \begin{align*}
          \ip{s_i^f}{x_i} = d_{i-1} - d_i + 1 - \sqrt{\frac{i-1}{i}} = f_i.
        \end{align*}
        
        \item (All remaining cases).
        Let \(i\geq 1\).
        We use
        $\ip{s_i^f}{x_j-x_i}
                =
                (\sqrt i-\sqrt{i-1})
                \left(
                \tfrac1{\sqrt j}-\tfrac1{\sqrt i}
                \right)$.
        If \(j<i\), then
        \[
        \begin{aligned}
                f_j-f_i
                &=
                \sum_{\ell=j}^{i-1}\gamma_\ell                                     =
                \sum_{\ell=j}^{i-1}
                (\sqrt{\ell+1}-\sqrt \ell)
                \left(
                \tfrac1{\sqrt \ell}
                -
                \tfrac1{\sqrt{\ell+1}}
                \right)                                                            \\
                &\geq
                (\sqrt i-\sqrt{i-1})
                \sum_{\ell=j}^{i-1}
                \left(
                \tfrac1{\sqrt \ell}
                -
                \tfrac1{\sqrt{\ell+1}}
                \right)                                                           =
                (\sqrt i-\sqrt{i-1})
                \left(
                \tfrac1{\sqrt j}-\tfrac1{\sqrt i}
                \right)                                                           =
                \ip{s_i^f}{x_j-x_i},
        \end{aligned}
        \]
        where we used that \(\ell\mapsto \sqrt{\ell+1}-\sqrt \ell\) is
        decreasing.
        If \(j>i\), then similarly
        \[
                f_i-f_j
                =
                \sum_{\ell=i}^{j-1}\gamma_\ell                                      \leq
                (\sqrt i-\sqrt{i-1})
                \sum_{\ell=i}^{j-1}
                \left(
                \tfrac1{\sqrt \ell}
                -
                \tfrac1{\sqrt{\ell+1}}
                \right)                                                           =
                (\sqrt i-\sqrt{i-1})
                \left(
                \tfrac1{\sqrt i}-\tfrac1{\sqrt j}
                \right)                                                           =
                -\ip{s_i^f}{x_j-x_i}.
        \]
        Thus the interpolation inequalities hold in all remaining cases.
\end{enumerate}

\paragraph{Verification of the exp-concave interpolation conditions for $h$.}
Now, we verify that~\eqref{eq:phi_interp} holds.
Recall \eqref{eq:x_i_and_s_i} and further define
\[
        \phi_0=  -1,
        \qquad
        h_0= -\log(-\phi_0)=0,
        \qquad
        s_0^h= 0,
        \qquad
        s_0^\phi= 0
\]
and, for $i=1,\ldots,k$,
\[
        \phi_i= -\tfrac{c}{\sqrt i},
        \qquad
        h_i= -\log(-\phi_i),
        \qquad
        s_i^\phi= -\phi_i s_i^h,
\]
where \(0<c<1\).
Finally, choose
\[
        x_0=
        \begin{pmatrix}
        1\\
        1/c
        \end{pmatrix}.
      \]
We define $x_0$ only now, because for \eqref{eq:proxMD} interpolation of $f$ does not require $x_0$.

\begin{enumerate}[label=($\phi$.\roman{*})]
        \item (\(i=0\)).
        Since \(s_0^\phi=0\), the inequalities reduce to
        $\phi_j\geq\phi_0=-1$, $j=0,1,\ldots,k$.
        This is immediate for \(j=0\).
        For \(j=1,\ldots,k\), it follows from $\phi_j=-\tfrac{c}{\sqrt j}\geq -1$ because \(0<c<1\).

        \item (\(j=0\)).
		For \(i=1,\ldots,k\), using \(s_i^\phi=-\phi_i s_i^h\), $x_0-x_i =         \begin{pmatrix}
        0\\
        1/c - 1/\sqrt i
        \end{pmatrix}$, 
       	we get
        \begin{align*}
			\phi_i+\ip{s_i^\phi}{x_0-x_i} &=           
                \phi_i
                \left(
                1-\ip{s_i^h}{x_0-x_i}
                \right)
                =
                -\tfrac{c}{\sqrt i}
                \left(
                1 + \sqrt{i}\left(\tfrac{1}{c} - \tfrac{1}{\sqrt i}\right) 
                                \right)
               = -1 = \phi_0,      
        \end{align*}
      so that  the interpolation inequalities~\eqref{eq:phi_interp} are tight when $j=0$.
        
              \item (\(i,j\in\{1,\ldots,k\}\)).
		Using that \(s_i^\phi=-\phi_i s_i^h\), $x_j-x_i =         \begin{pmatrix}
        0\\
        1/\sqrt j - 1/\sqrt i
        \end{pmatrix}$, 
                we have
                \begin{align*}
                  \phi_i+\ip{s_i^\phi}{x_j-x_i} &= \phi_i \left(1 -\ip{s_i^h}{x_j-x_i}\right) = \phi_i \left(1 +\sqrt{i}\left(\tfrac{1}{\sqrt{j}} - \tfrac{1}{\sqrt i} \right)\right) = \phi_i \sqrt{\tfrac{i}{j}} = \phi_j,
                \end{align*}
		so that the interpolation inequalities~\eqref{eq:phi_interp} are again tight for all $i,j\in\{1,\ldots,k\}$.
\end{enumerate}

\paragraph{Domain verification.}
Finally, we need to show that $\phi(x_\star)\geq 0$, so that $x_\star\in\CC$.
Given the construction~\eqref{eq:model_phi}, it suffices to verify that one term in the max is zero, which follows from $1-\ip{s_k^{h}}{x_\star-x_k}=0$.

\paragraph{Conclusion.}
We have shown that the data $\{(x_i,s_i^f,f_i)\}_{i\in\{\star,1,\ldots,k\}}$ is correctly interpolated by the convex function~\eqref{eq:model_f}, and that the data $\{(x_i,s_i^h,h_i)\}_{i\in\{0,1,\ldots,k\}}$ is correctly interpolated by the $1$-exp-concave function $-\log(-\phi)$, with the convex function $\phi$ from~\eqref{eq:model_phi}.
As this two-dimensional construction also satisfies optimality conditions of~\eqref{eq:proxMD}, it does correspond to a possible run of~\eqref{eq:proxMD} on the problem $\min_{x\in \CC}f(x)$ with barrier $h$.
Finally, as this example achieves~\eqref{eq:tight_WC}, we have proved Theorem~\ref{thm:tight_prox_MD}.

\section{Proximal mirror descent vs interior-point methods}\label{sec:ip}

In this section, we always assume that $h$ is a $\nu$-log-barrier.

In the mirror-descent setup, it is commonly assumed that the subproblems in \eqref{eq:MD} or \eqref{eq:proxMD} are simple enough to be solved in closed form or by inexpensive numerical procedures, such as a one-dimensional Newton method; see, for example, \cite{bauschke2017,lu2017}. In this case, the number of mirror-descent iterations is representative of the total computational cost. This is the \emph{first-order point of view}~--- the method has a relatively slow convergence rate but a cheap iteration.

In contrast, IPMs have a fast convergence rate but an expensive iteration that typically requires an inner optimization routine, such as Newton's method. In this case, the computational complexity is typically measured in terms of the total number of Newton iterations. This is the \emph{second-order point of view}. Interestingly, both points of view apply to \eqref{eq:proxMD}, and this section presents the latter.

As we mentioned in the introduction, classical IPMs can be viewed as lazy versions of \eqref{eq:proxMD}
\begin{equation}\tag{IPM}
x_{k} = \argmin_{x\in\SS}\left\{ f(x) + \tfrac{1}{\alpha_k} D_{h}(x,x_0)\right\},
\end{equation}
which enjoys the guarantee (e.g.,~\cite[Theorem 5.3.10]{nesterov2018lectures} or~\eqref{eq:proxMD_guarantee_analytic_center} with $k=1$) when $x_0$ is the analytic center:
\[f(x_k)-f_\star \leq \frac{\nu}{\alpha_k},\]
which is fast when $\alpha_k$ grows geometrically $\alpha_k = \alpha_{k-1}(1+\mu)$ for $\mu>0$.
On the other hand, we  know that \eqref{eq:proxMD} 
\begin{equation}\tag{proxMD}
x_{k} = \argmin_{x\in\SS}\left\{ f(x) + \tfrac{1}{\alpha_k} D_{h}(x,x_{k-1})\right\},
\end{equation}
can also be fast under quite broad assumptions. In particular,
Corollary~\ref{cor:proxMD_exponentialstepsize} states that, for geometrically growing
stepsizes $\alpha_k$, one has, crudely speaking and for a fixed $x_0$:
\begin{equation*}
        f(x_k)-f_\star \leq O\left(\frac{k \nu}{(1+\mu)^k}\right).
\end{equation*}
Clearly, individual iterations of \eqref{eq:IPM} and \eqref{eq:proxMD} are computationally equivalent (up to initialization), so it makes sense to conduct a thorough  comparison. 

We therefore have the following contest: an IPM with rigid assumptions and well-understood complexity versus a newcomer~--- \eqref{eq:proxMD}~--- which is much more general but also less understood from a complexity perspective. Can the plain proximal method stand up to the established IPM? Not yet, but the contest is not entirely fair: to make the comparison possible, we must work under assumptions designed specifically for IPMs. Nevertheless, we show that, even in this setting, \eqref{eq:proxMD} performs only slightly worse.

Below, we provide an idealized and simplified comparison in which all subproblems are solved exactly. This assessment should be understood as a proof of concept, while more detailed analyses, closer to those available for short-step interior-point methods, are left for future work.

\subsection{Reminders on Newton's method for self-concordant functions}

We briefly recall a standard complexity bound for damped Newton's method applied to a convex generalized self-concordant function $g:\R^d\to \R\cup\{+\infty\}$.
Consider
\[
\min_{y\in \R^d} g(y)
\]
and assume that $g$ is proper, convex, bounded below, and attains its minimum at some point $y_\star\in \inter(\dom g)$.
We further assume that $g$ is $M_g$-self-concordant on $\inter(\dom g)$ in the sense that
\[
|D^3 g(y)[u,u,u]|
\le
2 M_g \bigl(D^2 g(y)[u,u]\bigr)^{3/2}
\qquad
\forall y\in \inter(\dom g),\ \forall u\in \R^d,
\]
for some constant $M_g>0$.
Given $y\in \inter(\dom g)$, the Newton direction is $d(y)\coloneqq -[\nabla^2 g(y)]^{-1}\nabla g(y)$ and the classical Newton iteration is $y_+=y+\gamma\,d(y)$ for some $\gamma>0$.
Standard analyses are stated in terms of convergence of the Newton decrement \[\lambda_g(y)\coloneqq \sqrt{\ip{\nabla g(y)}{[\nabla^2 g(y)]^{-1}\nabla g(y)}}=\|\nabla g(y)\|_{[\nabla^2 g(y)]^{-1}},\]
and yield global convergence results for various stepsize strategies, such as using a backtracking procedure or damped Newton steps with $\gamma=[1+M_g\lambda_g(y)]^{-1}$ \cite{nemirovski2008interior,nesterov2018lectures}.
Then, standard analyses predict that at most
\begin{equation}\label{eq:global_Newton}
        AM^2_g (g(y_0)-g_\star) + B \log\log\frac1\epsilon 
\end{equation}
iterations are required by Newton's method to find some $\hat{y}$ such that $\lambda_g(\hat y)\leq \epsilon$; see, e.g.,~\cite[Eq. (5.2.11)]{nesterov2018lectures}.
In~\eqref{eq:global_Newton}, the first term captures the cost of globalization and depends on the initial objective gap, while the second term corresponds to the final local quadratic phase where $\gamma\approx 1$.
The precise values of factors $A$ and $B$ are irrelevant for our purposes, as we only rely on the structure of this bound for our qualitative analyses below. 

\subsection{Total number of Newton iterations}
This section outlines the computational complexity analysis for the problem 
\[
\min_{x\in\CC} f(x),
\]
where we assume that $f$ is $M_f$-self-concordant on $\inter(\CC)$. We also let $h$ be $1$-self-concordant on $\inter(\CC)$ and the subproblem $x_{k}=\argmin_x\{ G_k(x)\}$ be given by $G_k(x)=\alpha_k f(x)+D_h(x,x_{k-1})$. It follows from~\cite[Theorem 5.1.1]{nesterov2018lectures} and $\alpha_{k+1}\geq \alpha_k$, that $G_k$ is $M_k$-self-concordant with $M_k\leq \bar{M}\coloneqq \max\left\{\frac{M_f}{\sqrt{\alpha_1}},1\right\}$.

\paragraph{Outer iteration count.}
We aim to find $\hat{x}\in\CC$ such that $f(\hat{x})-f_\star\leq \delta$ for some prescribed accuracy $\delta>0$.
For completeness, we recall $A_k \coloneqq \sum_{i=1}^{k}\alpha_i$ and define
\begin{equation}\label{eq:Dxzeroprox}
        \Dxzeroprox \coloneqq 2\nu + \ip{\nabla h(x_0)}{ x_0-x_\star}
\end{equation}
for the initialization-dependent constant.
Similar to traditional IPMs, we choose for $k\geq 1$ some geometrically growing stepsizes $\alpha_{k+1}=\mu A_k$ with $\mu>0$ for some $A_1=\alpha_1>0$.
Using Corollary~\ref{cor:proxMD_exponentialstepsize}, one thus obtains
\begin{equation}\label{eq:prox_MD_geometric}
        f(x_k)-f_\star
        \leq
        \frac{ \Dxzeroprox + (k-1)\frac \nu 4\log(1+\mu)}{\alpha_1(1+\mu)^{k-1}}.
\end{equation}
Hence, the $\delta$-accuracy requirement is satisfied after at most
$ k = O\left(\frac{\log(\nu/\delta)}{\log(1+\mu)}\right)$ outer iterations.

\paragraph{Inner iteration count.}
We bound the number of Newton steps necessary to solve the inner optimization problem given the initial estimate $x_{k-1}$.
We assume that $x_{k-1}$ minimizes $G_{k-1}$ exactly.
The bound~\eqref{eq:global_Newton} gives
$N_k \leq A\bar M^2\bigl(G_k(x_{k-1})-G_k(x_k)\bigr)+C$
where $N_k$ is the number of inner Newton iterations to solve the $k$-th subproblem,
and $C=B\log\log\frac1\epsilon$ is treated as a constant.
We proceed as follows:
\begin{align*}
G_k(x_{k-1})-G_k(x_k)
={}&
\alpha_k\bigl(f(x_{k-1})-f(x_k)\bigr) - D_h(x_k,x_{k-1})
\\
\leq{}&
\alpha_k\bigl(f(x_{k-1})-f(x_k)\bigr)
\\
\leq{}&
\alpha_k\bigl(f(x_{k-1})-f_\star\bigr)
\\
\leq{}&
\mu \left(\Dxzeroprox + (k-2)\frac \nu 4\log(1+\mu)\right)
,
\end{align*}
where the last inequality follows from~\eqref{eq:prox_MD_geometric} and $\alpha_k = \mu A_{k-1} = \mu \alpha_1 (1+\mu)^{k-2}$.
Thus, for $k\geq 2$ we reach
\[
N_k \leq A\bar{M}^2 \mu \left(\Dxzeroprox + \frac{\nu}{4} \log(1+\mu)(k-2)\right) + C.
\]
In analogous way, for $k=1$ we have $N_1 \leq A\bar{M}^2 \alpha_1 \bigl(f(x_0)-f_\star\bigr) + C$.

\paragraph{Global Newton iteration count.}
The total number of Newton iterations is therefore upper-bounded by the sum of the previous estimates:
\begin{align*}
        \sum_{i=1}^k N_i
        \leq{}&
        A\bar{M}^2 \alpha_1 \bigl(f(x_0)-f_\star\bigr) + C k + A \bar{M}^2 \mu \sum_{i=2}^k \left( \Dxzeroprox + \frac{\nu}{4} \log(1+\mu) (i-2) \right)
        \\
        ={}&
        A\bar{M}^2 \alpha_1 \bigl(f(x_0)-f_\star\bigr) + C k + A \bar{M}^2 \mu \left( \Dxzeroprox k + \frac{\nu}{4} \log(1+\mu) \left(\frac{k(k+1)}{2}-2k\right) \right)
        .
\end{align*}
For fixed initial point $x_0$ and stepsize $\alpha_1$, the leading term in $k$ is $O\left( \mu \nu \log(1+\mu) k^2 \right)$.
From the outer iteration count estimate $k = O\left( \frac{\log(\nu/\delta)}{\log(1+\mu)} \right)$, the number of Newton iterations scales as $O\left( \frac{\mu \nu}{\log(1+\mu)} \log^2(\nu/\delta) \right)$.
Thus, fixing $\mu>0$, we obtain the complexity bound $O\left( \nu\log^2\tfrac{1}{\delta} \right)$.
For comparison, a similar analysis of IPMs yields the classical bound \(O(\nu \log \tfrac{1}{\delta})\).
When the objective is linear, one may also compare with short-step IPMs, which achieve \(O(\sqrt{\nu}\log \tfrac{1}{\delta})\); we note again that, for linear objectives,~\eqref{eq:proxMD} and~\eqref{eq:IPM} are equivalent up to a stepsize adjustment.
For nonlinear objectives, the standard IPM approach is to move the objective into the constraints, for instance through an epigraph reformulation, so that the nonlinear objective is instead handled through the barrier.

\section{Remarks and outlook}

Technically, this work establishes $\log(k)/k$-type bounds under exp-concavity of the mirror function.
While earlier versions of the proof were obtained through analogies with known techniques (inspired by gradient accumulation such as in~\cite{teboulle2023elementary}), the final version presented in Section~\ref{sec:prox_bregman} was refined using performance estimation problems for exp-concave functions, which also allowed us to construct examples on which the main bounds are attained.

\medskip

Potential ways to improve the bounds include a better model for log-barrier
functionals beyond assuming that they are ``just'' exp-concave, such as
log-homogeneity of the barriers, used in the study of interior-point methods.
Beyond this, it is known that accelerated convergence rates are out of reach for general mirror descent methods~\cite{dragomir2022optimal}, but it remains unclear how much this statement resists to stronger assumptions on the distance-generating function~\cite{dragomir2021bregman}.
Other directions include exploring approximate solvers (as in the hybrid
proximal extragradient
framework~\cite{monteiro2010complexity,monteiro2013accelerated}) and
investigating possible numerical advantages of proximal mirror methods combined with interior-point strategies, possibly for specific problem structures.
In particular, interior-point strategies require exponentially growing stepsizes, typically driving the corresponding subproblem's to strong ill-conditioning  while proximal mirror schemes (updating the prox-center) allow for bounded but potentially large stepsizes.

\small
\subsection*{Acknowledgments}
This research is supported by the European Union (ERC grant CASPER 101162889), by the \emph{Agence Nationale de la Recherche} through the \emph{France 2030} program (grant reference ANR-23-IACL-0008, \emph{PR[AI]RIE-PSAI}), and by the Austrian Science Fund (FWF 10.55776/STA223).
Views and opinions expressed are those of the authors only and do not necessarily reflect those of the funding agencies or granting authorities, which cannot be held responsible for them.

\phantomsection
\addcontentsline{toc}{section}{References}%
\bibliographystyle{habbrv}
\bibliography{bib}

\end{document}